\DeclareSymbolFont{AMSb}{U}{msb}{m}{n}
\DeclareMathSymbol{\Z}{\mathbin}{AMSb}{"5A}
\def\d{\hbox{-}}
\def\l{,\ldots,}
\newcommand\bbox{\hfill \quad $\Box$ \medbreak}
\newtheorem{lemma}[thm]{Lemma}
\newtheorem{step}{Step}
\begin{document}
\begin{frontmatter}
\title{Counting Paths in Digraphs}
\author{Paul Seymour\thanksref{pds}}
\thanks[pds]{Supported by ONR grant N00014-04-1-0062, and NSF grant DMS03-54465.}
\ead{pds@math.princeton.edu}
\author{Blair D. Sullivan\corauthref{cor}\thanksref{dhs}}
\thanks[dhs]{Partially supported by a Department of Homeland Security (DHS) Dissertation Grant.}
\corauth[cor]{Corresponding author.}
\address{Princeton University, Princeton, NJ 08544}
\ead{bdowling@math.princeton.edu}
\date{January 7, 2007; revised \today}
\end{frontmatter}

\begin{abstract}
Say a digraph is $k$-free if it has no directed cycles of length at most $k$, 
for $k \in \Z^+$. 
Thomass\'{e} conjectured that the number of induced $3$-vertex directed paths in 
a simple $2$-free digraph on $n$ vertices is at most $(n-1)n(n+1)/15$. 
We present an unpublished result of
Bondy proving there are at most $2n^3/25$ such paths, and prove that for the 
class of circular interval digraphs, an upper bound of $n^3/16$ holds. 
We also study the problem of bounding the number of (non-induced) 
$4$-vertex paths in $3$-free digraphs. We show an upper bound of $4n^4/75$ 
using Bondy's result for Thomass\'{e}'s conjecture.
\end{abstract}

\section{Introduction}
We begin with some terminology. All digraphs in this paper are finite.
For a digraph $G$, we denote its vertex and edge sets by $V(G)$ and $E(G)$, respectively. 
Unless otherwise stated, we assume $|V(G)| = n$. 
The members of $E(G)$ are ordered pairs of vertices. We use the notation $uv$ to denote an ordered 
pair of vertices $(u,v)$ (whether or not $u$ and $v$ are adjacent). We only consider 
digraphs which have no loop edges $uu$, and at most one 
directed edge $uv$ for all pairs of vertices $u \neq v$ (are {\em simple}). 
A {\it non-edge} in $G$ is an unordered pair of distinct vertices $u,v$ so that 
$uv,vu$ are both not in $E(G)$. We say a simple digraph $G$ is a {\it tournament} if 
for all pairs of vertices $u \neq v$, exactly one of $uv, vu$ is an edge. 

Given a vertex $v \in V(G)$, we define the set of {\em out-neighbors} to be 
$N^+(v) = \{u: vu \in E(G)\}$ and analogously $N^-(v) = \{u : uv \in E(G)\}$ to be the set of {\em in-neighbors}. 
Let $\delta^+(v) = |N^+(v)|$ and $\delta^-(v) = |N^-(v)|$ denote the {\em out-degree} and {\em in-degree}, respectively.

A {\em directed cycle of length $t$} is a digraph whose vertices and edges can be 
ordered as $v_1, e_1$, $v_2, \dots, e_{t-1}$, $v_t, e_t$ 
with $v_1, \dots, v_t$ distinct vertices, $e_i$ the directed edge $v_iv_{i+1}$ for $i = 1, \dots, t-1$, 
and $e_t = v_tv_1$. We may denote such a cycle as $v_1 \d v_2 \d \cdots \d v_t \d v_1$. 
For an integer $k\ge 0$, let us say a digraph $G$ is {\em $k$-free} 
if there is no directed cycle of $G$ with length at most $k$.  
A digraph is {\em acyclic} if it has no directed cycle. 

A {\em directed walk} in a digraph is a sequence $v_1, e_1, v_2, \dots, e_{t-1}, v_t$ where
$v_1, \ldots, v_t$ are vertices, and $e_i = v_iv_{i+1}$ is an edge for $i = 1, \dots, t-1$; its
{\it length} is $t-1$. 
A {\em directed path} in a digraph is a directed walk where $v_1, \dots, v_t$ are distinct vertices
(its length is $t-1$). We may denote a directed walk (or path) 
as $v_1 \d v_2 \d \cdots \d v_t$. We say a directed path is {\em induced} if every edge 
$v_iv_j$ satisfies $j = i+1$ for $0 \leq i,j \leq t$.  We say a digraph $G$ {\em is 
a directed path} if its vertex set can be labeled $v_1,\dots, v_n$ 
and its edges $e_1, \dots, e_{n-1}$ so that $v_1, e_1, v_2, \dots, e_{n-1}, v_n$ 
is an induced directed path in $G$. Let $W_s(G)$ be the number of distinct directed $s$-vertex walks in a digraph $G$,
$P_s(G)$ the number of distinct $s$-vertex directed paths, and $\tilde{P}_s(G)$ the number 
of distinct induced $s$-vertex directed paths.

The first result of this paper concerns a conjecture of Thomass\'e that the number of
induced $3$-vertex directed paths in a $2$-free digraph on $n$ vertices is at most $(n-1)n(n+1)/15$. 
The best known approximate result is due to Bondy, and is presented in Section \ref{bondysection}. 
We thank him for allowing us to include his proof in this paper. In this paper, we prove a strengthening of Thomass\'e's conjecture for 
``circular interval digraphs''. 

A digraph $G$ is a {\em circular interval digraph} if its vertices can be
arranged in a circle such that for every triple $u,v,w$ of distinct vertices, if $u,v,w$ are in clockwise order
and $uw\in E(G)$, then $uv,vw\in E(G)$. This is equivalent to saying that the vertex set of $G$ can be 
numbered as $v_1\l v_n$ such that for $1\le i\le n$, the set of out-neighbors of $v_i$ is 
$\{v_{i+1}\l v_{i+a}\}$ for some $a\ge 0$, and the set of in-neighbors of $v_i$ is 
$\{v_{i-b}\l v_{i-1}\}$ for some $b\ge 0$, reading subscripts modulo $n$. 

In Section \ref{CIGsection}, we show: 
\begin{thm} If $G$ is a $2$-free circular interval digraph on $n$ vertices, then $\tilde{P}_3(G) \leq n^3/16$. 
\end{thm}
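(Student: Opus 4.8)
The plan is to reduce the whole problem to the out-degree sequence and then solve an optimization problem. Fix a cyclic labelling $v_1\l v_n$ as in the definition and set $a_i=\delta^+(v_i)$, $b_i=\delta^-(v_i)$, so that $N^+(v_i)=\{v_{i+1}\l v_{i+a_i}\}$ and $N^-(v_i)=\{v_{i-b_i}\l v_{i-1}\}$ (subscripts mod $n$). Since $G$ is $2$-free, no vertex is simultaneously an in- and out-neighbour of $v_i$, so the two arcs are disjoint and avoid $v_i$; hence $a_i+b_i\le n-1$ for every $i$. For an ordered pair $p\ne q$ let $m_{pq}$ be the clockwise distance from $v_p$ to $v_q$, so $1\le m_{pq}\le n-1$ and $m_{pq}+m_{qp}=n$.

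First I would record that, because $N^+(v_p)$ and $N^-(v_q)$ are both arcs that (when $\{v_p,v_q\}$ is a non-edge) stop short of each other's endpoints, their intersection is an arc whose size is given explicitly by
\[
|N^+(v_p)\cap N^-(v_q)|=\max\bigl(0,\ a_p+b_q-m_{pq}+1\bigr).
\]
A vertex $v$ is the middle of an induced path $v_p\d v\d v_q$ exactly when $v\in N^+(v_p)\cap N^-(v_q)$ and $\{v_p,v_q\}$ is a non-edge, the latter being equivalent to $a_p<m_{pq}<n-a_q$. Summing over ordered pairs then yields the exact identity
\[
\tilde{P}_3(G)=\sum_{\substack{p\ne q\,:\,a_p<m_{pq}<n-a_q}}\max\bigl(0,\ a_p+b_q-m_{pq}+1\bigr),
\]
so that $\tilde{P}_3(G)$ is a function of the sequence $(a_1\l a_n)$ alone (each $b_q$ being determined by it), and the theorem becomes the assertion that this function never exceeds $n^3/16$ over sequences realizable by a $2$-free circular interval digraph.

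The essential difficulty is that the summand couples the out-degrees at three related positions, so no estimate using $a_i,b_i$ in isolation can work: a single vertex can be made the middle of roughly $n^2/4$ induced paths (let its in-neighbours point exactly up to it and its out-neighbours be near-sinks), and only the global $2$-free constraint forces the grand total down. To expose the extremal shape I would pass to the normalized profile $\rho(i)=a_i/n$ on a circle of circumference $1$ (coordinate $\theta\in[0,1)$); the sum above then converges, as $n\to\infty$, to $n^3\int_0^1 A(\theta)\,d\theta$, where $A(\theta)$ is the area of the region $\{(x,y):0<x\le\rho(\theta),\ 0<y\le\rho(\theta+x),\ x+y>\rho(\theta),\ x+y+\rho(\theta+x+y)<1\}$. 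For a constant profile $\rho\equiv c$ one computes $A=\tfrac12 c^2-\tfrac12(3c-1)^2$, maximized at $c=3/8$ with value exactly $1/16$; this both isolates the conjectured extremal digraph (each vertex dominating the next $3n/8$ vertices) and shows the bound is asymptotically sharp.

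The crux is therefore to prove the constant profile is optimal, i.e. $\int_0^1 A\le 1/16$ for every admissible $\rho$, and to keep the error terms controlled so the inequality holds exactly for finite $n$ rather than only in the limit. My plan for this is a smoothing argument: show that replacing $\rho$ on a short arc by its local average does not decrease $\tilde{P}_3$, using convexity of the inner count together with the $2$-free inequalities (which bound the backward term $\rho(\theta+x+y)$ and keep the competing regions from interacting), and then iterate to the constant profile. Making this exchange step monotone in the presence of the three-point coupling is where the circular-interval structure — rather than mere $2$-freeness — must be used, and I expect it to be the main obstacle; once it is in place, the theorem follows from the one-variable optimization above.
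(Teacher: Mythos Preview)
Your reduction to the out-degree sequence is correct, the intersection formula is right, and the continuum computation for the constant profile $\rho\equiv c$ is fine; the identification of $c=3/8$ as the asymptotic extremum is exactly the heuristic behind the paper's construction. But what you have written is a plan, not a proof: the smoothing step you yourself flag as ``the main obstacle'' is the entire content of the theorem, and you give no argument for it. The functional $\int A$ couples $\rho$ at three moving points $\theta,\theta+x,\theta+x+y$, and there is no evident convexity to exploit. Raising $\rho$ on a short arc simultaneously creates new middles for some pairs and destroys induced paths by converting non-edges $\{v_p,v_q\}$ into edges; the sign of the net change depends on the values of $\rho$ elsewhere on the circle, so a local averaging step is not automatically monotone. ``Using the circular-interval structure'' is a hope, not an argument.

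The paper's proof takes a completely different, purely combinatorial route. It fixes an \emph{optimal} digraph (maximizing $\tilde P_3$ and, among maximizers, minimizing the number of length-inversions between edges and non-edges) and proves, through a chain of exchange lemmas culminating in an ``augmenting sequence'' that simultaneously swaps a matched collection of longest edges and shortest non-edges, that every optimal digraph satisfies $\beta_G\le\alpha_G$. Hence an optimal $G$ is $G_\beta$ minus a subset of length-$\beta$ edges, with $|8\beta-3n|\le 2$, and an explicit finite computation finishes. That augmenting-sequence exchange is doing, in a discrete and exactly inequality-preserving way, the work your smoothing step would have to do. Even granting a continuum bound $\int A\le 1/16$, you would still owe the exact inequality for every finite $n$, not merely up to $o(n^3)$; the paper handles this via the case analysis on $8\beta-3n\in\{-2,-1,0,1,2\}$, whereas your ``keep the error terms controlled'' is not yet a proof either.
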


The second result of this paper was motivated by the following problem. 
For integer $t$, let $\alpha_t$ be the minimum constant so that 
all $n$-vertex digraphs with minimum out-degree at least $\alpha_t n$ have a directed cycle of length at most $t$ 
(it can be proved that $\alpha_t$ exists). 
The Caccetta-H\"{a}ggkvist conjecture \cite{CH} is that $\alpha_t = 1/t$. 
A number of papers have focused on the special case of getting an upper bound on $\alpha_3$ that 
is as close to $1/3$ as possible. The most recent result by Shen \cite{shen:hhk} slightly 
tightens an argument of Hamburger, Haxell, and Kostochka \cite{hhk} and proves $\alpha_3 \leq .3530381$. 

One possible approach for finding upper bounds on $\alpha_3$ is to find bounds on the 
number of short directed walks in $3$-free digraphs. If $G$ is a digraph on $n$ vertices with minimum out-degree $d$, 
then $W_s(G) \geq d^{s-1} n$, and hence a bound of the form $W_s(G) \leq (c_{s}n)^{s}$ for $3$-free digraphs 
$G$ would prove there is a vertex of out-degree at most $(c_{s})^{\frac{s}{s-1}}n$. 
 
We observe that if $G$ is $3$-free, then $W_4(G) = P_4(G)$. We will show: 
\begin{thm}\label{ip4thm} If $G$ is a $3$-free digraph on $n$ vertices, then $P_4(G) \leq \frac{4}{75}n^4.$ 
\end{thm}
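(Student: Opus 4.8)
The plan is to first reduce to counting walks and then to invoke Bondy's bound on induced $3$-vertex paths. Since $G$ is $3$-free we have $P_4(G)=W_4(G)$, and grouping directed $4$-walks $a\to b\to c\to d$ by their middle edge $bc$ gives
\[
P_4(G)=W_4(G)=\sum_{bc\in E(G)}\delta^-(b)\,\delta^+(c).
\]
The first fact I would record is structural: for any edge $bc$ of a $3$-free digraph the sets $N^-(b)$, $N^+(c)$ and $\{b,c\}$ are pairwise disjoint. Indeed, a common vertex $x$ of $N^-(b)$ and $N^+(c)$ would give a directed triangle $b\to c\to x\to b$, while $c\in N^-(b)$ or $b\in N^+(c)$ would give a digon on $\{b,c\}$. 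Hence $\delta^-(b)+\delta^+(c)\le n-2$ for every edge $bc$; this is the inequality that will supply the extra factor of $n$ relating the cubic quantity $\tilde P_3$ to the quartic $W_4$.

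The heart of the argument is to bound $W_4(G)$ by the number of \emph{induced} $3$-paths rather than by all $3$-walks. I would split each $4$-walk according to whether its two internal $3$-vertex sub-walks $a\to b\to c$ and $b\to c\to d$ are induced or transitive; note that in a $3$-free digraph a directed $3$-walk $x\to y\to z$ fails to be induced only when $xz\in E(G)$, since $zx\in E(G)$ would close a triangle. Writing $p_{bc}$ (resp.\ $q_{bc}$) for the number of induced paths $b\to c\to y$ having $bc$ as first edge (resp.\ $x\to b\to c$ having $bc$ as last edge), one has $\sum_{bc\in E(G)}p_{bc}=\sum_{bc\in E(G)}q_{bc}=\tilde P_3(G)$, and the target reduces to the sharp inequality
\[
P_4(G)\le \tfrac{2n}{3}\,\tilde P_3(G).
\]
Combined with Bondy's bound $\tilde P_3(G)\le \tfrac{2}{25}n^3$ this yields $P_4(G)\le \tfrac{2n}{3}\cdot\tfrac{2}{25}n^3=\tfrac{4}{75}n^4$, as required. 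I would verify that equality holds throughout for the circular blow-up in which every vertex dominates the next $n/3$ vertices (a $3$-free digraph), both to confirm the constant and to pinpoint the extremal configuration.

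The main obstacle is establishing $P_4\le \tfrac{2n}{3}\tilde P_3$ with the exact constant. The natural edge-local version, $\delta^-(b)\delta^+(c)\le \tfrac{n}{3}(p_{bc}+q_{bc})$, is false for ``short'' edges and holds only after summation over all edges, so the $4$-walks whose internal $3$-walks are transitive — and which are therefore invisible to $\tilde P_3$ — must be controlled globally rather than one edge at a time. I expect the degree bound $\delta^-(b)+\delta^+(c)\le n-2$, together with the identities $\sum_{bc}p_{bc}=\sum_{bc}q_{bc}=\tilde P_3(G)$, to drive this, the remaining work being to show that the excess from transitive sub-walks is offset by a surplus of induced paths — most cleanly through a convexity or Cauchy–Schwarz estimate on the degree sequence rather than a term-by-term comparison.
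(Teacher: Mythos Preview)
The central inequality you are aiming for, $P_4(G)\le \tfrac{2n}{3}\tilde P_3(G)$, is false, and the failure is not a boundary effect that summation can repair. Take $G$ to be a transitive tournament on $n\ge 4$ vertices. It is acyclic, hence $3$-free, and every $3$-vertex path $a\to b\to c$ has the chord $ac$, so $\tilde P_3(G)=0$. Yet every $4$-element subset of $V(G)$ supports exactly one directed $4$-path, so $P_4(G)=\binom{n}{4}>0$. Thus $P_4(G)\le \tfrac{2n}{3}\tilde P_3(G)$ cannot hold. More generally, the ``transitive'' $4$-walks---those whose internal $3$-walks are both chorded---can constitute the entirety of $P_4$, and they are genuinely invisible to $\tilde P_3$; no convexity or Cauchy--Schwarz argument on degrees will manufacture induced $3$-paths that are not there. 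Your verification on the $n/3$-circulant is correct but misleading: that digraph happens to sit at the point where Bondy's bound is (asymptotically) tight, so any inequality of the shape $P_4\le c\,n\,\tilde P_3$ calibrated to yield $4/75$ will look like an equality there.

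The paper's argument avoids this trap by not bounding $P_4$ purely in terms of $\tilde P_3$. Instead it writes $24P_4(G)=n^4+3S-|N|$, where $S$ counts ordered $4$-tuples inducing a square and $N$ is the set of ordered $4$-tuples carrying no $4$-path; the additive $n^4$ absorbs the transitive-tournament-like contributions. Two lemmas then do the work: $S\le \tfrac{3n}{2}\tilde P_3(G)$ (a weighting over squares and their diagonals) and $|N|\ge \tfrac{2}{3}S$ (a Cauchy--Schwarz-type estimate associating $4$-tuples in $N$ to squares). These give $24P_4\le n^4+\tfrac{7}{3}S\le n^4+\tfrac{7}{2}n\,\tilde P_3(G)$, and Bondy's $\tilde P_3\le \tfrac{2}{25}n^3$ finishes. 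If you want to salvage your walk-counting viewpoint, the missing piece is an additive $n^4$ (or equivalently a direct bound on the number of ``doubly transitive'' $4$-walks) rather than a multiplicative improvement.
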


Note that there exists an infinite family of $3$-free graphs where 
$P_4(G)/n^4 \rightarrow \frac{25}{512} \approx .0488$ as 
$n \rightarrow \infty$. These graphs are given by taking four acyclic tournaments $S_1, \dots, S_4$, each on
$n/4$ vertices and adding the edges $uv$ where $u \in S_i$ and $v \in S_{i+1}$ for $i = 1,2,3$, 
as well as those from $S_4$ to $S_1$. This shows that using an upper bound on $c_4$ to 
imply a bound on $\alpha_3$ will not lead to an improvement of Shen's result. Theorem \ref{ip4thm} implies that any 
$3$-free digraph on $n$ vertices has minimum out-degree at most $\sqrt[3]{4/75}n \approx .3764n$. 

\section{Thomass\'e's Conjecture and Bondy's Result} \label{bondysection}

There was a workshop on the Caccetta-H\"aggkvist Conjecture at the American Institute of Mathematics (AIM) in January of 2005. 
In discussions at that workshop, Thomass\'e proposed the following conjecture, and Bondy proved a partial result that we use in Section
\ref{p4section}.

\begin{conj}[Thomass\'{e}]\label{thomasseconj}
If $G$ is a $2$-free digraph on $n$ vertices, then 
$$\tilde{P}_3(G) \leq \frac{(n-1)n(n+1)}{15}.$$
\end{conj}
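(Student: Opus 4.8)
The plan is to count by the middle vertex of each path and then reduce to a degree optimization. For a vertex $b$ write $A_b = N^-(b)$ and $C_b = N^+(b)$; since $G$ is $2$-free these are disjoint, so $|A_b| + |C_b| = \delta^-(b)+\delta^+(b)\le n-1$. An induced $3$-vertex path with $b$ in the middle is exactly a pair $(a,c)\in A_b\times C_b$ with $\{a,c\}$ a non-edge, so
$$\tilde{P}_3(G) = \sum_{b\in V(G)}\big(\delta^-(b)\,\delta^+(b) - e_b\big),$$
where $e_b$ is the number of adjacent pairs $(a,c)\in A_b\times C_b$. Classifying such a pair by the direction of its edge, a forward edge $a\to c$ gives a transitive triangle (with $b$ its unique middle vertex) and a backward edge $c\to a$ gives a directed triangle $a\to b\to c\to a$ (counted three times as $b$ ranges over its vertices), so $\sum_b e_b = T + 3C$ with $T$ the number of transitive triangles and $C$ the number of directed triangles. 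Hence
$$\tilde{P}_3(G) = \sum_b \delta^-(b)\,\delta^+(b) \;-\; T \;-\; 3C.$$
Equivalently, $\tilde P_3(G)$ is the number of directed $2$-paths whose two endpoints are non-adjacent.

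The strategy I would then follow is to bound the total number of directed $2$-paths, $\sum_b \delta^-(b)\delta^+(b)$, and separately to show that when the digraph is dense a forced fraction of those $2$-paths must have \emph{adjacent} endpoints, i.e. to prove a supersaturation lower bound $T+3C \ge \phi(\text{degrees})$. The crude estimate $\delta^-(b)\delta^+(b)\le((n-1)/2)^2$ alone only gives $n^3/4$, so this triangle term is essential. Having expressed both pieces through the degree sequence, I would apply convexity (Lagrange multipliers) to the symmetric function $\sum_b \delta^-(b)\delta^+(b)$ under the constraints $\delta^-(b)+\delta^+(b)\le n-1$ and $\sum_b\delta^+(b)=\sum_b\delta^-(b)=|E(G)|$, collapsing the whole estimate onto the single density parameter $|E(G)|/n^2$. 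I would expect that, once the supersaturation bound on $T+3C$ is supplied, a careful optimization along this route produces a bound of the form $cn^3$, and that the best constant obtainable by such elementary degree-based means is exactly Bondy's $c=2/25$.

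The hard part is the constant $1/15$. The convexity relaxation is tight only at the level of the degree sequence and cannot see the genuine trade-off that governs the extremum: increasing the edge density raises $\sum_b\delta^-(b)\delta^+(b)$ but simultaneously forces more triangles, which enter with a negative sign. Moreover the conjectured extremal digraph is \emph{not} a circular interval digraph—those are already capped at $n^3/16<n^3/15$ by the circular interval theorem proved earlier—so the correct framework is the limiting ``digraphon'' problem of maximizing
$$\iiint W(x,y)\,W(y,z)\,\big(1 - W(x,z) - W(z,x)\big)\,dx\,dy\,dz$$
over all $\{0,1\}$-valued $W$ on $[0,1]^2$ with $W(x,y)W(y,x)=0$, the conjecture being that this maximum equals $1/15$. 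Identifying the extremal $W$ and proving its optimality—presumably through a variational computation together with a stability argument ruling out nearby competitors—is the real obstacle, and is precisely why the tight bound has remained a conjecture while only weaker constants such as $2/25$ are known.
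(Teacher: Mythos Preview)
The statement you were asked to prove is listed in the paper as a \emph{conjecture} (Conjecture~\ref{thomasseconj}), not a theorem; the paper does not prove it and explicitly says that the best known bound is Bondy's $2n^3/25$. So there is no ``paper's own proof'' to compare against, and your proposal is not in fact a proof either --- as you yourself acknowledge in the second half, the approach you outline is expected to recover only $c=2/25$, and you end by explaining why $1/15$ ``has remained a conjecture.'' In that sense your write-up is an honest assessment of the difficulty rather than a proof attempt, and its conclusion matches the paper's.

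On the part you do sketch: your identity $\tilde P_3(G)=\sum_b \delta^-(b)\delta^+(b)-T-3C$ is exactly Bondy's fourth equation $s_4+s_6+3s_7=\sum_i d_i^-d_i^+$ rewritten, so the starting point coincides with the paper's proof of Theorem~\ref{bondythm}. Where you diverge is in the plan to bound $T+3C$ from below by a separate ``supersaturation'' lemma and then optimize over the degree sequence via Lagrange multipliers. Bondy's actual argument is slicker and avoids any such lemma: he writes down five linear identities among the counts $s_1,\dots,s_7$ of the seven $3$-vertex induced subgraphs, takes a single carefully chosen nonnegative linear combination, and completes the square to get $s_4\le 2n^3/25$ directly. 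No supersaturation input is needed, and the coefficients in the combination are what encode the trade-off you describe. Your route might also reach $2/25$, but the supersaturation step you left unspecified is doing real work and is not obviously available at the strength you would need; Bondy's explicit linear-algebraic manipulation is the cleaner way to the same constant.
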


This is tight on the following infinite family of digraphs: Let $G_0$ be the digraph 
consisting of a single vertex and no edges. Define $G_i$ for $i \geq 1$ to be the digraph obtained
by taking four disjoint copies of $G_{i-1}$ (call them $D_1, D_2, D_3, D_4$) and forming the 
digraph with vertex set $V(G_i) = \bigcup_{j=1}^{4} V(D_j)$ and edge set 
\begin{displaymath}
E(G_i) = \left(\bigcup_{j=1}^4 E(D_j)\right) \cup \{uv : u \in D_j, v \in D_{j+1}, j = 1,2,3,4\},
\end{displaymath} 
where $D_5$ means $D_1$. In other words, arrange four copies of $G_{i-1}$ in a square 
and put in all edges between consecutive copies in a clockwise direction. 
It is easy to check inductively that $\tilde{P}_3(G_i) = (n_i-1)n_i(n_i+1)/15$, where $n_i = 4^i = |V(G_i)|$.

The best known result for general $2$-free digraphs is due to Bondy, whom we thank for permission to include his result here.

\begin{thm}[Bondy] \label{bondythm}
If $G$ is a $2$-free digraph on $n$ vertices, then $\tilde{P}_3(G) \leq \frac{2}{25}n^3$. 
\end{thm}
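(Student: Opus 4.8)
The plan is to count induced $3$-paths by their middle vertex and reduce to a global trade-off between the number of $2$-paths (cherries) and the number of triangles. Fix a vertex $v$ and set $A=N^-(v)$, $B=N^+(v)$, $a=\delta^-(v)$, $b=\delta^+(v)$. Since $G$ is $2$-free, $A$ and $B$ are disjoint, so every choice of $u\in A$ and $w\in B$ gives a $2$-path $u\d v\d w$ with $u\ne w$, and such a path is induced exactly when $\{u,w\}$ is a non-edge. Hence the number of induced $3$-paths with middle $v$ equals $ab$ minus the number of adjacent pairs across $A$ and $B$, and each such adjacent pair is a triangle through $v$: a pair with $u\d w\in E(G)$ forms a transitive triangle with $v$ as its middle vertex, while a pair with $w\d u\in E(G)$ forms a directed triangle through $v$. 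Summing over $v$, and noting that each transitive triangle is counted once (at its unique middle vertex) and each directed triangle three times, then gives
$$\tilde P_3(G)=P_3(G)-T-3C,\qquad P_3(G)=\sum_{v}\delta^-(v)\delta^+(v),$$
where $T$ and $C$ are the numbers of transitive and directed triangles.

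Next I would bound the two pieces. Writing $\delta(v)=\delta^-(v)+\delta^+(v)$ for the degree in the underlying graph and $m=|E(G)|$, the cherry count satisfies $P_3(G)=\sum_v\delta^-(v)\delta^+(v)\le\frac14\sum_v\delta(v)^2$. For the triangles I would use a supersaturation estimate: for an edge with endpoints $u,w$, these are genuine neighborhoods in the underlying graph, so they share at least $\delta(u)+\delta(w)-n$ common neighbors, each giving a triangle on that edge; summing over edges yields $3(T+C)\ge\sum_v\delta(v)^2-mn$. As a warm-up, when $G$ has no triangle at all this already gives $\tilde P_3=P_3\le\frac14\sum_v\delta(v)^2\le\frac14 mn\le\frac14\cdot\frac{n^2}{4}\cdot n=\frac{n^3}{16}$ by Mantel's theorem, comfortably below $\frac{2}{25}n^3$, so the real work lies in the triangle-rich regime.

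Combining the estimates, with $S=\sum_v\delta(v)^2$ I obtain $\tilde P_3\le\frac14 S-(T+3C)\le\frac14 S-(T+C)\le\frac14 S-\frac13(S-mn)=\frac{mn}{3}-\frac{S}{12}$, and then $S\ge(2m)^2/n$ by Cauchy--Schwarz; optimizing the resulting expression $\frac{mn}{3}-\frac{m^2}{3n}=\frac{m(n^2-m)}{3n}$ over $m$ produces a bound of the correct order, on the order of $n^3/12$.

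The main obstacle is then sharpening this constant to $\frac{2}{25}$. The three inequalities above are simultaneously loose: Cauchy--Schwarz is tight only for a degree-regular $G$, the step $T+3C\ge T+C$ discards cyclic triangles, and $P_3\le\frac14\sum_v\delta(v)^2$ is tight only when $\delta^-(v)=\delta^+(v)$ for every $v$ --- yet the configuration maximizing the crude bound ($m\approx n^2/2$) is nearly a tournament and so has almost no non-edges, hence almost no induced paths. I would therefore refine the argument to track these quantities jointly, keeping the per-vertex split $(\delta^-(v),\delta^+(v))$ and the non-edge count rather than collapsing to $\delta(v)$ and $m$, and set up the optimization over the feasible region cut out by the resulting constraints, where I expect the optimum of the relaxation to be exactly $\frac{2}{25}n^3$. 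Verifying that this refined program has optimum $\frac{2}{25}$, and not something larger, is the crux of the proof.
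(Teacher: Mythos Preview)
Your argument is correct as far as it goes, but it is not a proof of the theorem: you establish only $\tilde P_3(G)\le n^3/12$ and then defer the passage to $2n^3/25$ to an unspecified ``refined program'' whose optimum you \emph{expect} to be $2/25$. That refinement is the entire content of the theorem, and you have not carried it out. In particular, the three lossy steps you flag---collapsing $(\delta^-(v),\delta^+(v))$ to $\delta(v)$, replacing $T+3C$ by $T+C$, and applying Cauchy--Schwarz to $S$---each destroy information that is needed to reach $2/25$; there is no way to recover the sharp constant from $m$ and $S$ alone.

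Bondy's proof does exactly the refinement you gesture at, and the point is that it requires a specific numerical choice, not just ``keeping more quantities''. He classifies the seven $3$-vertex induced subgraphs and writes down the \emph{exact} identities $s_3+s_6=\sum\binom{d_i^-}{2}$, $s_5+s_6=\sum\binom{d_i^+}{2}$, $s_4+s_6+3s_7=\sum d_i^-d_i^+$ (your $P_3$), together with the total edge count. He then takes the particular combination
\[
s_4\le \tfrac{2}{5}s_2+\tfrac{1}{10}s_3+s_4+\tfrac{1}{10}s_5+\tfrac{9}{5}s_7,
\]
substitutes the identities, and completes the square in $d_i^-,d_i^+$ about the value $2n/5$, which is what produces the constant $2/25$. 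Your supersaturation lower bound $3(T+C)\ge S-mn$ is a weakening of these exact identities (it corresponds to throwing away the nonnegative counts $s_3,s_5$), and with only that weakening the resulting relaxation genuinely has optimum of order $n^3/12$, not $2n^3/25$. So to finish along your lines you must reinstate $\sum\binom{d_i^\pm}{2}$ as separate quantities and find the right multipliers; once you do, you will have reproduced Bondy's argument.
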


\begin{figure}[tbp]
\begin{center}
\includegraphics[width=.6\textwidth]{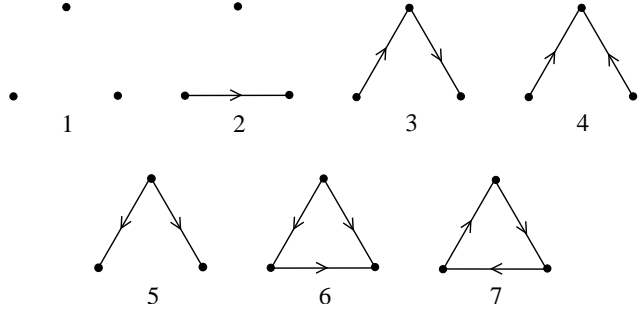}
\caption{The $3$-vertex digraphs.}
\label{3vd}
\end{center}
\end{figure}

\begin{proof}
There are seven digraphs on three vertices up to isomorphism, which we call types $1, \dots, 7$ as shown in Figure~\ref{3vd}.
Given a digraph $G$ with vertex set $\{v_i:1 \leq i \leq n\}$, let
$d_i^-$ and $d_i^+$ denote the in-degree and out-degree of $v_i$ ($1 \leq i
\leq n$) and $s_j$ the number of induced subgraphs of type $j$ in $G$ ($1 \leq j \leq 7$). 

The following five equations hold:
\begin{eqnarray*}
s_{1}+s_{2}+s_{3}+s_{4}+s_{5}+s_{6}+s_7&=&\binom{n}{3}\\
s_{2}+2s_{3}+2s_{4}+2s_{5}+3s_{6}+3s_7&=& \frac{1}{2}(n-2)\sum_i(d_i^-+d_i^+)\\
s_{3}+s_{6}&=&\sum_i\binom{d_i^-}{2}\\
s_{4}+s_{6}+3s_7&=&\sum_i d_i^-d_i^+\\
s_{5}+s_{6}&=&\sum_i\binom{d_i^+}{2}.
\end{eqnarray*}
We prove an upper bound on $s_4 = \tilde{P}_3(G)$ as follows:
\begin{eqnarray*}
s_4 &\leq &    \frac{2}{5}s_2 + \frac{1}{10}s_3 +s_4+\frac{1}{10}s_5+\frac{9}{5}s_7\\
    &=&    \frac{2}{5}(s_{2}+2(s_{3}+s_{4}+s_{5})+3(s_{6}+s_7))-\frac{7}{10}(s_3 + s_5+ 2s_6)
            +\frac{1}{5}(s_4+s_6+3s_7)\\
    &=&     \frac{n-2}{5}\sum_i(d_i^-+d_i^+) - \frac{7}{20}\sum_i(((d_i^-)^2 - d_i^-) + (d_i^+)^2 - d_i^+)) 
            +\frac{1}{5}\sum_id_i^-d_i^+\\
    &=&     \frac{n}{5}\sum_i(d_i^-+d_i^+) - \frac{7}{20}\sum_i((d_i^-)^2  + (d_i^+)^2) 
            +\frac{1}{5}\sum_id_i^-d_i^+ -\frac{1}{20}\sum_i (d_i^+ + d_i^-)\\
    &=&     \frac{2n^3}{25} -\frac{1}{10}\sum_i(d_i^--d_i^+)^2 - 
	    \frac{1}{4}\sum_i\left(\frac{2n}{5}-d_i^-\right)^2 -\frac{1}{4}\sum_i\left(\frac{2n}{5}-d_i^+\right)^2\\
   &\leq&   \frac{2n^3}{25}, 
\end{eqnarray*}
which proves Theorem \ref{bondythm}.
\end{proof}

\section{Induced $3$-vertex Paths in Circular Interval Digraphs}\label{CIGsection}

The main result of this section is: 
\begin{thm}\label{CIGthm}
If $G$ is a $2$-free circular interval digraph on $n$ vertices, then $\tilde{P}_3(G) \leq n^3/16.$ 
\end{thm}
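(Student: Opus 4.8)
The plan is to use the circular interval structure to convert the count into a purely geometric one, and then to bound the resulting functional of the out-degree sequence by an optimization argument whose extremal configuration is the uniform digraph of out-degree $3n/8$. First I would fix the cyclic labelling $v_1\l v_n$ from the definition, so that $v_iv_j\in E(G)$ exactly when the clockwise gap $(j-i)\bmod n$ is at most $d_i^+=\delta^+(v_i)$. For each vertex $u$ write $\phi(u)=u+d_u^+$ (mod $n$) for the last out-neighbour of $u$ (its ``reach endpoint''), and note that $2$-freeness forces $d_u^+<n/2$ for all $u$. A short check then shows that a clockwise triple $x,y,z$ gives an induced path $x\d y\d z$ precisely when $\phi(x)$ lies in the clockwise arc $[y,z)$ (so $x\to y$ is an edge but $x\to z$ is not), $\phi(y)$ lies in $[z,x)$ (so $y\to z$ is an edge; the upper end is automatic from $2$-freeness), and $\phi(z)$ lies in $[z,x)$ (so $z\to x$ is a non-edge). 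Hence, writing $\alpha,\beta,\gamma$ for the three clockwise arc lengths with $\alpha+\beta+\gamma=n$,
\[
\tilde P_3(G)=\#\{(x,y,z)\text{ clockwise}:\ \alpha\le d_x^+<\alpha+\beta,\ \beta\le d_y^+,\ \gamma>d_z^+\}.
\]

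Next I would record the extremal target. For the uniform digraph with $d_u^+\equiv d$ for all $u$, summing the displayed count over the (by symmetry, identical) choices of middle vertex gives $n\cdot\tfrac12(n-2d)(4d-n)$ in the range $n/3\le d<n/2$; this is maximized at $d=3n/8$, where it equals $n^3/16$. This both identifies the conjectured extremal configuration and shows the bound is best possible. It also shows that Bondy's bound $2n^3/25$ is not tight on this class (it evaluates to roughly $0.0797n^3$ here), so the circular structure must be used in an essential way, not merely through its degree sequence.

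The heart of the proof is to show that no $2$-free circular interval digraph exceeds this value. I would treat $\tilde P_3(G)$ as a functional of the out-degree sequence $(d_u^+)$ and relax to a continuous circle $S^1$ of circumference $1$ carrying a reach density $\rho\colon S^1\to[0,\tfrac12)$, where the count becomes, after normalizing by $n^3$, the functional
\[
I[\rho]=\int_{S^1}\int_{\substack{\alpha,\beta,\gamma>0\\ \alpha+\beta+\gamma=1}}\mathbf 1[\rho(x)\in[\alpha,\alpha+\beta)]\,\mathbf 1[\rho(y)\ge\beta]\,\mathbf 1[\rho(z)<\gamma]\,,
\]
with $y=x+\alpha$ and $z=x+\alpha+\beta$. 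The task is to prove $\sup_\rho I[\rho]=1/16$, attained by $\rho\equiv 3/8$. Since the problem is rotation-invariant, I expect the optimum to be constant, which reduces the supremum to the one-variable calculus optimization of the previous paragraph (maximizing $\tfrac12(1-2t)(4t-1)$, giving $1/16$ at $t=3/8$); the work is to justify that reduction, either by a symmetrization/exchange argument on $\rho$ or by a direct concavity estimate for each fixed pair among $x,y,z$, and then to transfer the continuous bound back to the exact discrete value $n^3/16$ while controlling the lower-order error.

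The main obstacle is exactly this optimization. The difficulty is that the two non-edge requirements — $\phi(x)$ falling before $z$ and $\phi(z)$ falling before $x$ — involve the reaches of two distinct vertices at once, so the count neither factors over a single vertex nor can be bounded vertex-by-vertex. Moreover, because the interval structure ties each reach to a position on the circle, $\tilde P_3(G)$ genuinely depends on the arrangement and not only on the multiset of out-degrees, which blocks any naive rearrangement or Schur-convexity shortcut. Surmounting this coupling is precisely the step at which the circular interval hypothesis must be exploited in full.
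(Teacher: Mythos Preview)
Your reformulation of $\tilde P_3(G)$ in terms of clockwise gaps and out-degrees is correct, and you have identified the right extremal object. But the heart of the argument --- showing that the functional $I[\rho]$ is maximized at constant $\rho\equiv 3/8$ --- is asserted, not proved. You write ``I expect the optimum to be constant'' and then list two possible mechanisms (symmetrization/exchange, or a concavity estimate) without carrying either out; and you already observe that the count depends on the arrangement of the reaches, not just their multiset, which is exactly what blocks the obvious symmetrizations. Nothing in the proposal closes this gap. There is a second, separate problem: even if the continuous inequality $I[\rho]\le 1/16$ were established, passing back to the integers would a priori give only $\tilde P_3(G)\le n^3/16+o(n^3)$, and you offer no device for eliminating the error to get the exact bound the theorem claims.

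For comparison, the paper does not relax to a continuum at all. It works entirely in the discrete setting via a local-move/extremal argument: call $G$ \emph{optimal} if it maximizes $\tilde P_3$ and, among maximizers, minimizes a potential $\xi(G)$ counting pairs (edge, non-edge) where the edge is longer than the non-edge. A chain of lemmas --- deleting a longest edge, inserting a shortest non-edge, ruling out stable triples, and finally analysing \emph{augmenting sequences} that alternate longest edges and shortest non-edges --- forces any optimal $G$ to satisfy $\beta_G\le\alpha_G$, hence to be $G_\beta$ with possibly some edges of length $\beta$ deleted, with $\beta$ pinned to the narrow window $|8\beta-3n|\le 2$. The theorem then reduces to a short explicit computation on this finite family. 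The upshot is that the paper proves the discrete analogue of your ``constant $\rho$ is optimal'' step by combinatorial moves rather than analysis, and this is precisely the step your proposal leaves open.
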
 

We first show this is best possible. Let $G$ be a $2$-free circular interval digraph. For $u,v \in V(G)$ let 
\begin{displaymath}
d(u,v) = \begin{cases}
1+ |\{w\in V(G): u,w,v\text{ distinct, in clockwise order}\}| & \text{if } u \neq v \\
0 & \text{if } u = v.
\end{cases}
\end{displaymath}
For every pair $uv$, we say its {\em length} is $d(u,v)$. For integer $\beta$, 
let $G_{\beta}$ be the circular interval digraph on $n$ vertices with 
$E(G_\beta) = \{uv\, :\, 0 < d(u,v) \leq \beta\}$. 

\begin{lemma} For infinitely many values of $n$, there are circular interval digraphs on 
$n$ vertices with exactly $n^3/16$ induced $3$-vertex paths.
\end{lemma}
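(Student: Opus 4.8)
The plan is to exhibit a suitable member of the family $G_\beta$ defined just above, and in fact to show that a single well-chosen $G_\beta$ achieves equality. First I would record the structure of $G_\beta$ explicitly: numbering the vertices $v_1\l v_n$ in clockwise order, we have $v_iv_j\in E(G_\beta)$ exactly when the clockwise step length $(j-i)\bmod n$ lies in $\{1\l\beta\}$, so every vertex has out-neighbours precisely the next $\beta$ vertices clockwise. Provided $2\beta<n$, no pair $u,v$ has both $uv$ and $vu$ present (their two clockwise step lengths sum to $n>2\beta$), so $G_\beta$ is $2$-free; and it is a circular interval digraph straight from the definition, since if $u,v,w$ are clockwise with $d(u,w)\le\beta$ then $d(u,v),d(v,w)<d(u,w)\le\beta$.

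Next I would reduce $\tilde P_3(G_\beta)$ to a lattice-point count. Because $G_\beta$ is $2$-free, a directed path $v_i\d v_j\d v_k$ is induced precisely when its endpoints $v_i,v_k$ form a non-edge, the two reverse path-edges being automatically absent. Writing $x=(j-i)\bmod n$ and $y=(k-j)\bmod n$, the path-edges force $x,y\in\{1\l\beta\}$; since then $0<x+y\le 2\beta<n$ we get $(k-i)\bmod n=x+y$, and the non-edge condition on $v_i,v_k$ becomes $\beta+1\le x+y\le n-\beta-1$. By rotational symmetry the starting vertex contributes a free factor of $n$, giving
\begin{displaymath}
\tilde P_3(G_\beta)=n\cdot\bigl|\{(x,y): 1\le x,y\le\beta,\ \beta+1\le x+y\le n-\beta-1\}\bigr|.
\end{displaymath}
The inner cardinality is a truncated triangular sum, which I would evaluate in closed form as a function of $n$ and $\beta$.

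Finally I would optimize the choice of $\beta$. Writing $\beta=\rho n$ and letting $n\to\infty$, the inner cardinality is asymptotic to $\tfrac{n^2}{2}(-8\rho^2+6\rho-1)$, so $\tilde P_3(G_\beta)\sim\tfrac{n^3}{2}(-8\rho^2+6\rho-1)$, which is maximized at $\rho=3/8$ with value $n^3/16$; this both explains the constant and pinpoints where equality must be sought. The genuine difficulty is passing from this asymptotic optimum to \emph{exact} equality for infinitely many $n$: I would take $n\equiv 4\pmod 8$ and set $\beta=(3n-4)/8$ (an integer close to $3n/8$, automatically satisfying $2\beta<n$), and then show by direct evaluation of the triangular sum that the boundary corrections cancel perfectly, leaving $\tilde P_3(G_\beta)=n^3/16$ on the nose. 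The main obstacle is therefore not the idea but this arithmetic fine-tuning — choosing the residue class of $n$ and the precise value of $\beta$ so the discrete count lands exactly on $n^3/16$ instead of merely approaching it. A reassuring sanity check is the smallest case $n=4$, $\beta=1$, where $G_1$ is a directed $4$-cycle with exactly $4=4^3/16$ induced $3$-vertex paths.
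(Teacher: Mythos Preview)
Your proposal is correct and follows essentially the same approach as the paper: both take $\beta=(3n-4)/8$ (which forces $n\equiv 4\pmod 8$) and verify that $\tilde P_3(G_\beta)=n^3/16$ exactly, via the closed form $\tilde P_3(G_\beta)=n(n-2\beta-1)(2\beta-n/2+1)$, which your lattice-point count reproduces. Your asymptotic optimisation at $\rho=3/8$ is extra motivation the paper omits, but the construction and the arithmetic check are the same.
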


\begin{proof}
Let $n$ be chosen so that $\beta = (3n-4)/8$ is an integer. 
A straightforward computation shows the number of induced $3$-vertex paths in $G_\beta$ 
is $n(n-2\beta-1)(2\beta - n/2 + 1)$. Then $G_{(3n-4)/8}$ has $(n-(3n-4)/4 -1)((3n-4)/4 -n/2 + 1) = n^3/16$
induced $3$-vertex paths.
\end{proof}

To prove Theorem \ref{CIGthm}, we first need a few definitions and lemmas. 
Given $X \subseteq V(G)$, define $G|X$ to be the digraph with vertex set $X$ and 
edge set $\{uv \in E(G): u,v \in X\}$. For $Y \subseteq E(G)$, we write 
$G\setminus Y$ for the digraph with vertex set
$V(G)$ and edge set $E(G)\setminus Y$. If $Y = \{e\}$ where $e = uv$, then 
we may abbreviate as $G\setminus Y = G\setminus e  = G\setminus uv$. 
If $Z$ is a set of non-edges of $G$, we write $G + Z$ for the digraph with vertex set $V(G)$
and edge set $E(G) \cup \{uv:\, uv \in Z\}$. Analogously, if $Z = \{f\}$ with $f = uv$, we may 
write $G + f = G + uv = G + Z$.
 
We define $\alpha_G$ to be the length of a shortest non-edge in $G$ (if $G$ has a non-edge, and otherwise
we let $\alpha_G = \infty$). We also define $\beta_G$ to be the length of a longest edge in $G$ (if $G$ has an edge, 
and otherwise we let $\beta_G = 0$). 

\begin{lemma}\label{stayCIGobs} Let $G = (V,E)$ be a $2$-free circular interval digraph. 
Let $X$ be a set of longest edges in $G$ and $Y$ a set of shortest non-edges in $G$ so that for all $u,v \in V(G)$, 
$uv$ and $vu$ are not both in $Y$. Then $G\setminus X$ and $G + Y$ are 
$2$-free circular interval digraphs. Additionally, if $\alpha_G \leq \beta_G$, then
the digraph $(G\setminus X) + Y$ is also a $2$-free circular interval digraph. 
\end{lemma}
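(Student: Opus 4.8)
The plan is to verify each of the three claimed conclusions separately, since they are structurally similar but require slightly different care. The key fact to exploit is the characterization of circular interval digraphs given in the introduction: the vertices can be numbered $v_1\l v_n$ so that for each $i$, the out-neighbors of $v_i$ form a ``clockwise interval'' $\{v_{i+1}\l v_{i+a_i}\}$ and the in-neighbors form a ``counterclockwise interval'' $\{v_{i-b_i}\l v_{i-1}\}$ (subscripts mod $n$). Equivalently, using the length function $d(u,v)$, an edge $uv$ of length $\ell$ forces every pair ``nested inside'' it in clockwise order to also be an edge. The whole proof reduces to checking that deleting a longest edge or adding a shortest non-edge preserves (i) the interval structure and (ii) $2$-freeness.

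First I would handle $G\setminus X$. For the interval property, observe that removing a \emph{longest} edge $uv$ can only shorten the out-interval of $u$ at its far end and the in-interval of $v$ at its far end; since $uv$ has length $\beta_G$, it is the outermost pair in $u$'s out-interval (any edge longer than $uv$ would contradict maximality), so deleting it leaves $\{v_{i+1}\l v_{i+a_i-1}\}$, still a clockwise interval, and symmetrically for the in-interval. Doing this simultaneously for all edges in $X$ is consistent because every edge in $X$ is a longest edge, hence each is outermost in its respective intervals. For $2$-freeness, deleting edges can never create a directed cycle, so $G\setminus X$ is automatically $2$-free. Next, for $G+Y$, adding a \emph{shortest} non-edge $uv$ of length $\alpha_G$ extends $u$'s out-interval by one at its far end (there is no non-edge shorter than $uv$, so $uv$ sits immediately beyond the current out-interval of $u$), and symmetrically extends $v$'s in-interval, keeping both intervals. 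The hypothesis that $Y$ contains at most one of $uv,vu$ guarantees we never try to add both directions of a pair, which is exactly what is needed to keep the result simple and to avoid creating a $2$-cycle; this is where $2$-freeness of $G+Y$ must be argued, namely that adding $uv$ creates no digon $uv,vu$ and no new short cycle beyond that.

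The hard part is the combined statement $(G\setminus X)+Y$ under the hypothesis $\alpha_G\le\beta_G$. Here the subtlety is that the two operations interact: the interval endpoints moved by deletion and by addition might collide, and more importantly, one must rule out that adding a shortest non-edge of $G$ creates a directed $2$-cycle in $(G\setminus X)$. My plan is to show that under $\alpha_G\le\beta_G$ the edges being deleted and the non-edges being added occupy ``different regions'' of each vertex's neighborhood: deletion trims the outermost (length-$\beta_G$) layer, while addition fills in the innermost (length-$\alpha_G$) layer just beyond the existing out-interval, and since $\alpha_G\le\beta_G$ these layers are distinct or at worst adjacent, so the resulting out-neighbor set remains a single clockwise interval. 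The only way adding $uv$ could create a $2$-cycle is if $vu$ is an edge of $(G\setminus X)$; I would argue that $vu$ being an edge of length $n-\alpha_G$ (the complementary length) together with $\alpha_G\le\beta_G$ and $2$-freeness of $G$ forces a contradiction, so no digon arises. I expect this digon-exclusion argument, together with carefully tracking that the simultaneous endpoint movements still yield genuine intervals, to be the main obstacle; the rest is routine bookkeeping with the length function $d(\cdot,\cdot)$ and the interval characterization.
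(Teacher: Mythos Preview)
The paper gives no proof of this lemma (note the label \texttt{stayCIGobs}: it is treated as a routine observation), so your verification via the interval characterization is exactly the natural thing to do, and your handling of the circular-interval property is correct. In particular, your observation for the combined case is right: under $\alpha_G\le\beta_G$, a vertex with an outgoing longest edge has out-degree $\beta_G$ while a vertex with an outgoing shortest non-edge has out-degree $\alpha_G-1<\beta_G$, so the two operations never touch the same end of the same interval.

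Where you are making life harder than necessary is the $2$-freeness argument. You propose, for $(G\setminus X)+Y$, to derive a contradiction from ``$vu$ an edge of length $n-\alpha_G$ together with $\alpha_G\le\beta_G$ and $2$-freeness of $G$''. That argument is not needed, and as stated it does not obviously go through (nothing you list forces $n-\alpha_G>\beta_G$). The point you are missing is the paper's definition: a \emph{non-edge} is an \emph{unordered} pair $\{u,v\}$ with \emph{neither} $uv$ nor $vu$ in $E(G)$. Hence if $uv\in Y$ then automatically $vu\notin E(G)$, and a fortiori $vu\notin E(G\setminus X)$; combined with the hypothesis that $uv,vu$ are not both in $Y$, this rules out any digon in $G+Y$ and in $(G\setminus X)+Y$ with no further work. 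The same remark applies to your treatment of $G+Y$: the hypothesis on $Y$ alone is not ``exactly what is needed'' --- you also need $vu\notin E(G)$, and that comes for free from the definition of non-edge. Once you invoke this definition, both $2$-freeness claims are one line, and the role of the hypothesis $\alpha_G\le\beta_G$ is purely to keep the interval structure intact, not to exclude digons.
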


Let $\xi(G)$ denote the number of pairs $(uv,wx)$ where $uv$ is an edge of $G$, 
$wx$ is a non-edge and $d(u,v) > d(w,x)$ ($u,v$ are not necessarily distinct from $w,x$). 
For a fixed $n \geq 4$, say a digraph $G$ is {\em optimal} if among all $2$-free circular 
interval digraphs on $n$ vertices, it has the maximum number of $3$-vertex induced directed paths and subject 
to this, $\xi(G)$ is minimum.

We now show optimal digraphs do not have edges with length at least $n/2$.

\begin{lemma}\label{n/2lemma} If $G$ is an optimal digraph on $n$ vertices, then $\beta_G < n/2$.
\end{lemma}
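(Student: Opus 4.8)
The plan is to argue by contradiction. Suppose $G$ is optimal with $\beta_G \ge n/2$, and show that deleting a single longest edge produces a $2$-free circular interval digraph that is at least as good, contradicting optimality. Fix a longest edge $uv$ and relabel the circular order so that $u = v_0$ and $v = v_\beta$ with $\beta = \beta_G = d(u,v)$. Because $uv$ is a longest edge, the out-neighbours of $u$ are exactly $\{v_1, \ldots, v_\beta\}$ and the in-neighbours of $v$ are exactly $\{v_0, \ldots, v_{\beta-1}\}$ (the edge $uv$ saturates both arcs, and no edge is longer than $\beta$). Write $m = n - \beta - 1$ for the number of vertices strictly between $v$ and $u$ clockwise; since $\beta \ge n/2$ we have $m \le n/2 - 1 \le \beta - 1$.

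Next I would delete $uv$. By Lemma \ref{stayCIGobs}, $G \setminus uv$ is again a $2$-free circular interval digraph, and I would compute $\tilde{P}_3(G\setminus uv) - \tilde{P}_3(G)$ directly. Deleting $uv$ destroys exactly the induced paths using this edge, namely those of the form $u \to v \to c$ and $a \to u \to v$, and creates exactly those whose endpoints are $u$ and $v$ and which become induced now that $\{u,v\}$ is a non-edge, namely $u \to b \to v$ and $v \to b \to u$. The created paths $u \to b \to v$ are in bijection with $\{v_1, \ldots, v_{\beta-1}\}$, contributing $\beta - 1$; the destroyed paths and the paths $v \to b \to u$ all use out-neighbours of $v$ or in-neighbours of $u$, which lie among the $m$ short-side vertices, so they are governed by the overlap of two arcs of the short side. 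A short bookkeeping of these overlaps yields the clean bound $\tilde{P}_3(G\setminus uv) - \tilde{P}_3(G) \ge 2\beta - n$.

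Since $\beta \ge n/2$ this difference is nonnegative, so $\tilde{P}_3(G\setminus uv) \ge \tilde{P}_3(G)$; as $G$ maximizes $\tilde{P}_3$, equality must hold, which forces $\beta = n/2$ (so $n$ is even) and pins the overlap to its extreme case, where the out-degree of $v$ plus the in-degree of $u$ equals $m$. To finish I would compare $\xi$. Deleting the length-$(n/2)$ edge $uv$ turns $\{u,v\}$ into a non-edge of length $n/2$; since no edge of $G$ is longer than $\beta_G = n/2$, this new non-edge creates no new counted pairs, whereas the deleted edge $uv$ destroys every pair it formed with a strictly shorter non-edge. It therefore suffices to exhibit one non-edge of length less than $n/2$, and in the pinned configuration such a non-edge is easy to locate just past the end of the out-arc of $v$ (or of the in-arc of $u$). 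Hence $\xi(G\setminus uv) < \xi(G)$ while $\tilde{P}_3$ is unchanged, contradicting the minimality of $\xi$ among optimal digraphs, and so $\beta_G < n/2$.

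I would expect the main obstacle to be precisely the boundary case $\beta = n/2$: there the path count does not strictly increase, so the path-count optimality criterion alone gives no contradiction, and one must invoke the secondary ($\xi$-minimality) criterion and produce an explicit short non-edge. The other point requiring care is the arc-overlap bookkeeping in the computation of $\tilde{P}_3(G\setminus uv) - \tilde{P}_3(G)$, where one must correctly separate the $\beta - 1$ guaranteed new paths from the short-side terms in order to reach the bound $2\beta - n$.
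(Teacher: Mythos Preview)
Your proposal is correct and follows essentially the same route as the paper: delete a longest edge $uv$, compare $\tilde P_3$ via the overlap $c=|N^+(v)\cap N^-(u)|$ on the short arc (the paper's inequality $\delta^+(v)+(\delta^-(u)-c)+2+(\beta-1)\le n$ is exactly your ``arc-overlap bookkeeping''), conclude from optimality that $\beta=n/2$, $c=0$, and $\delta^+(v)+\delta^-(u)=m$, and then exhibit a non-edge of length $<n/2$ on the short side to force $\xi(G\setminus uv)<\xi(G)$. The only point to tighten in a full write-up is that your stated bound $\ge 2\beta-n$ actually comes out as $\ge 2\beta-n+2c$, and it is the extra $2c$ term that pins down $c=0$ in the equality case---which you need (and implicitly use) to guarantee the short non-edge exists.
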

\begin{proof}
If $G$ has no edges, then $\beta_G = 0 < n/2$, so we may assume $E(G) \neq \emptyset$. Suppose $\beta =
\beta_G \geq n/2$ and let $e = uv$ be an edge of length $\beta$. Let $G' = G \setminus e$, 
which is also a $2$-free circular interval digraph by Lemma \ref{stayCIGobs}. Define $c = |N^+(v) \cap N^-(u)|$. Then
\begin{equation}\label{n/2_eqnA}
\tilde{P}_3(G') = \tilde{P}_3(G) - (\delta^-(u) + \delta^+(v) - 2c) + (\beta -1 + c).
\end{equation} 
Since $N^+(v), N^-(u)\setminus N^+(v), \{u,v\}$, and $\{w : u,w,v$ are in clockwise order, $w \neq u,v\}$ 
are disjoint sets in $V(G)$, we have 
\begin{equation}\label{n/2_eqnB}
\delta^+(v) + (\delta^-(u) -c)+ 2 + (\beta -1) \leq n.
\end{equation} 
Rearranging (\ref{n/2_eqnB}) gives $\delta^-(u) + \delta^+(v) \leq n-1-\beta + c$, and 
substituting for $\delta^-(u) + \delta^+(v)$ in  (\ref{n/2_eqnA})
gives $\tilde{P}_3(G') \geq \tilde{P}_3(G) - (n-1-\beta + c - 2c) + (\beta -1 + c)$ or 
\begin{equation*}
\tilde{P}_3(G') \geq \tilde{P}_3(G) - (n- 2\beta - 2c). 
\end{equation*}
Since $\tilde{P}_3(G') \leq \tilde{P}_3(G)$ because $G$ is optimal, we have $n-2\beta-2c \geq 0$. 
Since $\beta \geq n/2$, it follows that $\beta = n/2$, $c = 0$, and $\tilde{P}_3(G') = \tilde{P}_3(G)$. Hence $d(v,u) = 
n-d(u,v) = n/2 \geq 2$ ($n \geq 4$ since $G$ is optimal). Then there is at least one vertex $w$ so that $u,v,w$ appear
in clockwise order. Since $c = 0$, one of $vw,wu$ must be a non-edge, and thus $\alpha_G < n/2$. But then 
$\xi(G') < \xi(G)$, contradicting the optimality of $G$.
This proves Lemma \ref{n/2lemma}.
\end{proof}

We now prove a straightforward lemma giving upper and lower bounds on the vertex degrees in an optimal digraph.

\begin{lemma}\label{outdegbounds} For every vertex $v$ in an optimal digraph $G$, 
\begin{displaymath}
\alpha_G -1 \leq \delta^+(v), \delta^-(v) \leq \beta_G.
\end{displaymath}
\end{lemma}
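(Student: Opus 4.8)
The plan is to read both inequalities directly off the circular-interval structure, using Lemma~\ref{n/2lemma} to control the one boundary phenomenon that arises. Number the vertices $v_1\l v_n$ around the circle as in the definition of a circular interval digraph, and fix $v=v_i$, writing $a=\delta^+(v_i)$ and $b=\delta^-(v_i)$, so that $N^+(v_i)=\{v_{i+1}\l v_{i+a}\}$ and $N^-(v_i)=\{v_{i-b}\l v_{i-1}\}$ (subscripts mod $n$).

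For the upper bounds I would simply exhibit the long edges at $v_i$. The pair $v_iv_{i+a}$ to the farthest out-neighbour is an edge of length $d(v_i,v_{i+a})=a$, so $\delta^+(v_i)=a\le\beta_G$ (and trivially $0\le\beta_G$ if $a=0$); symmetrically $v_{i-b}v_i$ is an edge of length $b$, giving $\delta^-(v_i)=b\le\beta_G$. This uses nothing beyond the definitions of $\beta_G$ and of length.

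For the lower bound on the out-degree I would look at the first clockwise non-out-neighbour $w=v_{i+a+1}$. By the description of $N^+(v_i)$ we have $v_iw\notin E$ and $d(v_i,w)=a+1$. The crux is to upgrade this to a genuine (undirected) non-edge, i.e.\ to rule out $wv_i\in E$: were $wv_i$ an edge we would get $\delta^+(w)\ge d(w,v_i)=n-a-1$, whereas Lemma~\ref{n/2lemma} gives $\delta^+(w)\le\beta_G<n/2$, forcing $n-a-1<n/2$; combined with $a\le\beta_G<n/2$ this is only possible when $n$ is odd and $a=(n-1)/2$. So apart from that single residual case $\{v_i,w\}$ is a non-edge and $\alpha_G\le d(v_i,w)=a+1=\delta^+(v_i)+1$, as required. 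The lower bound on $\delta^-(v_i)$ is the mirror statement, obtained either from the first counterclockwise non-in-neighbour $v_{i-b-1}$ or by applying the out-degree bound to the reverse of $G$ (reversing all edges together with the cyclic orientation preserves $2$-freeness, the circular-interval property, $\tilde P_3$, $\alpha_G$, $\beta_G$ and $\xi$, hence optimality, while interchanging $\delta^+$ and $\delta^-$).

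I expect the residual case $a=(n-1)/2$ with $n$ odd to be the only real obstacle. To clear it I would note that an optimal $G$ is never a tournament: there are $2$-free circular interval digraphs on $n$ vertices with $\tilde P_3>0$ (for instance an induced path $v_1\d v_2\d v_3$ with all other vertices isolated), while every tournament has $\tilde P_3=0$ because an induced $3$-vertex path needs non-adjacent endpoints. Hence $G$ has a non-edge, and taking the shorter of its two orientations gives $\alpha_G\le n/2$; since $\alpha_G$ is an integer and $n$ is odd this reads $\alpha_G\le(n-1)/2=a<a+1$, so $\alpha_G-1\le\delta^+(v_i)$ in the residual case as well. Everything else is immediate from the interval structure, which is presumably why the authors call the lemma straightforward.
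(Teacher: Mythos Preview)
Your argument is correct and follows essentially the same route as the paper: read the upper bound off the longest edge at $v$, and for the lower bound look at the first clockwise non-out-neighbour and invoke Lemma~\ref{n/2lemma}. The only organisational difference is that the paper argues by contradiction from $\delta^+(v)\le\alpha_G-2$, which (via $\alpha_G\le\beta_G+1<n/2+1$) forces $\delta^+(v)<n/2-1$ outright and so eliminates your residual case $a=(n-1)/2$ before it arises; your direct argument is equally valid but costs you the extra ``$G$ is not a tournament'' step.
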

\begin{proof} Let $v$ be a vertex in an optimal digraph $G$ on $n$ vertices. 
Certainly $\alpha_G$ is finite, as otherwise $G$ has no induced directed paths
of length greater than one. 
Suppose $\delta^+(v) < \alpha_G -1$. We first show that $v$ has a non-neighbor.
We have $\alpha_G \le \beta_G + 1$, 
since if $v_iv_j$ is a shortest non-edge then $v_iv_{j-1}$ is an edge, and 
therefore has length at most $\beta_G$.
Then since $\beta_G <n/2$ by Lemma \ref{n/2lemma}, 
it follows that $\alpha_G < n/2 + 1$. Now $\delta^+(v) \leq \alpha_G -2$ implies  
$\delta^+(v) < n/2 -1$.  Since $v$ has in-degree at most
$(n-1)/2$ by Lemma \ref{n/2lemma}, it has less than $(n/2-1) + (n-1)/2 = n -3/2$ neighbors. 
Consequently $v$ has a non-neighbor, and we let $u$ be the first vertex following $v$ in the 
clockwise order for which $vu$ is a non-edge. Then $vu$ has length $\delta^+(v) + 1 < \alpha_G$, 
a contradiction to the definition of $\alpha_G$. Analogously, $\delta^-(v) \geq \alpha_G -1$. 
Now, suppose $\delta^+(v) > \beta_G$. Then the 
edge from $v$ to its last clockwise out-neighbor has length $1+ (\delta^+(v)-1) > \beta_G$. This
contradicts the definition of $\beta_G$. Again, $\delta^-(v) \leq \beta_G$ by an analogous argument.
\end{proof}

This allows us to give a lower bound on $\alpha_G$ in optimal digraphs $G$.

\begin{lemma}\label{n/4lemma} If $G$ is an optimal digraph on $n$ vertices, then $\alpha_G > n/4$.
\end{lemma}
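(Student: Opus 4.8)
The plan is to mirror the edge-removal argument of Lemma~\ref{n/2lemma}, but in the opposite direction: assume for contradiction that $\alpha = \alpha_G \le n/4$, take a shortest non-edge $f = wx$, and pass to $G'' = G + f$, which is again a $2$-free circular interval digraph by Lemma~\ref{stayCIGobs}. I would first compute exactly how $\tilde{P}_3$ changes when $wx$ is turned into an edge. Writing $c = |N^+(x) \cap N^-(w)|$, the same bookkeeping as in (\ref{n/2_eqnA}) should give
$$\tilde{P}_3(G'') = \tilde{P}_3(G) + \delta^-(w) + \delta^+(x) - 3c - \alpha + 1.$$
The three ingredients that make this count clean are all consequences of the circular-interval structure together with $2$-freeness and the minimality of $\alpha$: every one of the $\alpha - 1$ vertices strictly between $w$ and $x$ is both an out-neighbour of $w$ and an in-neighbour of $x$ (those pairs are shorter than the shortest non-edge, hence edges), so exactly $\alpha - 1$ formerly induced paths $w \d z \d x$ become transitive triangles; the $c$ paths $x \d z \d w$ become directed triangles; and no new ``wrap-around'' coincidences occur because every degree is below $n/2$.

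The second step is to evaluate $c$ geometrically. A vertex $z$ contributes to $c$ exactly when it lies on the long arc from $x$ to $w$ with $d(x,z) \le \delta^+(x)$ and $d(z,w) \le \delta^-(w)$; since $d(x,z) + d(z,w) = n - \alpha$ there, counting admissible positions yields
$$c = \max\bigl(0,\; \delta^-(w) + \delta^+(x) + \alpha + 1 - n\bigr).$$
I would then feed this into the optimality inequality $\tilde{P}_3(G'') \le \tilde{P}_3(G)$, i.e.\ $\delta^-(w) + \delta^+(x) - 3c - \alpha + 1 \le 0$, using Lemma~\ref{outdegbounds} for $\alpha - 1 \le \delta^-(w), \delta^+(x) \le \beta_G$ and Lemma~\ref{n/2lemma} for $\beta_G < n/2$. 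If $c = 0$ the inequality reads $\delta^-(w) + \delta^+(x) \le \alpha - 1$, clashing with $\delta^-(w) + \delta^+(x) \ge 2(\alpha - 1)$ unless $\alpha \le 1$. If $c > 0$, substituting the closed form for $c$ and using $\delta^-(w) + \delta^+(x) \le 2\beta_G \le n - 1$ collapses to $\alpha \ge n/4$; the parity of $n$ and integrality of $\alpha$ then upgrade this to the strict bound, since for even $n$ one has the sharper $2\beta_G \le n-2$, forcing $\alpha \ge (n+2)/4$, while for odd $n$ the value $n/4$ is not an integer.

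The only uncovered case is $\alpha = 1$, which is where I expect the main friction: here the primary optimality criterion is exhausted, since the displayed formula gives $\tilde{P}_3(G'') = \tilde{P}_3(G)$ exactly, and one must invoke the secondary criterion that $\xi(G)$ is minimum. Turning a length-$1$ non-edge into an edge creates no pair counted by $\xi$ (there is no shorter non-edge) and destroys one such pair for each edge of length greater than $1$, so $\xi(G'') \le \xi(G)$ with equality only if $\beta_G \le 1$. Optimality then forces $\beta_G \le 1$; but then every induced $3$-path has the shape $v_i \d v_{i+1} \d v_{i+2}$, whence $\tilde{P}_3(G) \le n$, contradicting the fact that an optimal digraph has $\tilde{P}_3(G)$ of order $n^3$ (the construction preceding Theorem~\ref{CIGthm} already exhibits circular interval digraphs with that many induced $3$-paths). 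This rules out $\alpha = 1$ and finishes the proof. The delicate points are the exact derivation of the change formula and of the closed form for $c$ (where the circular-interval hypotheses must be used carefully to discard wrap-around terms), and squeezing the \emph{strict} inequality $\alpha > n/4$ out of an estimate that naively yields only $\alpha \ge (n-2)/4$.
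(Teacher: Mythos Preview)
Your plan is essentially the paper's argument: add a shortest non-edge $uv$, compute $\tilde P_3(G')-\tilde P_3(G)=\delta^-(u)+\delta^+(v)-3c-\alpha+1$, split on whether $c=0$, and in the $c>0$ case combine the partition identity $\delta^+(v)+\delta^-(u)-c+(\alpha-1)+2=n$ with $\delta^+(v)+\delta^-(u)\le 2\beta_G$ to force $4\alpha\ge n$. One pleasant deviation: where the paper invokes the tie-breaker $\xi$ to upgrade $\alpha\ge n/4$ to strict, you instead observe that for even $n$ Lemma~\ref{n/2lemma} gives the sharper $2\beta_G\le n-2$, yielding $\alpha\ge(n+2)/4$, while for odd $n$ integrality already suffices. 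That genuinely works and is arguably cleaner than the paper's route here.

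The one real gap is your handling of the residual case $\alpha=1$. You correctly reach $\tilde P_3(G'')=\tilde P_3(G)$ and use $\xi$ to force $\beta_G\le 1$, but then appeal to ``an optimal digraph has $\tilde P_3(G)$ of order $n^3$''. That comparison fails for small $n$: for $n=4$ and $n=5$ the optimum is exactly $n$ (attained by the $n$-cycle $G_1$), not of order $n^3$. The clean fix is local: if $\alpha_G=1$ and $\beta_G\le 1$ then $G$ is a \emph{proper} subgraph of $G_1$ (the non-edge $wx$ of length $1$ is missing), and deleting any edge of $G_1$ destroys two induced $3$-paths while creating none, so $\tilde P_3(G)\le n-2<\tilde P_3(G_1)$, contradicting optimality. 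The paper handles this case differently, by adding \emph{two} length-$1$ edges $uv,vw$ at once and exhibiting the new induced path $u\hbox{-}v\hbox{-}w$; either repair completes your argument.
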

\begin{proof} If $G$ has no non-edges, then $\alpha_G = \infty > n/4$. We may assume $G$ has a non-edge. 
Suppose $\alpha = \alpha_G \leq n/4$ and let $e = uv$ be a non-edge of length $\alpha$. 
Let $G' = G + e$, which is also a $2$-free circular interval digraph by 
Lemma \ref{stayCIGobs}. Define $c = |N^+(v) \cap N^-(u)|$. Then 
\begin{equation*}
\tilde{P}_3(G') = \tilde{P}_3(G) - (\alpha-1 + c) + (\delta^-(u) + \delta^+(v) - 2c).
\end{equation*}
Since $\tilde{P}_3(G') \leq \tilde{P}_3(G)$ because $G$ is optimal, we have 
\begin{equation}\label{n/4_eqnB} 
\alpha-1 + 3c \geq \delta^-(u) + \delta^+(v).
\end{equation}
Suppose $c = 0$. Then since $\delta^+(v), \delta^-(u) \geq \alpha -1$ by Lemma \ref{outdegbounds}, we have $\alpha = 1$ and $\delta^-(u) = \delta^+(v) = 0$. Letting $w$ be the vertex immediately following $v$ 
in the circular order, we see that $G + \{uv,vw\}$ has more induced $2$-edge paths than $G$, 
contradicting its optimality. Thus $c > 0$. Now $N^+(v), N^-(u)\setminus N^+(v)$, 
$\{w: u,w,v $ are in clockwise order, $w \neq u,v\}$, and $\{u,v\}$ form a 
partition of $V(G)$, so 
\begin{equation}\label{n/4_eqnC}
\delta^+(v) + \delta^-(u) - c + (\alpha -1) + 2 = n.
\end{equation}
We observe that Lemmas \ref{n/2lemma} and \ref{outdegbounds} imply  
\begin{equation}\label{n/4_eqnD}
\delta^+(v) + \delta^-(u) \leq 2\beta \leq n-1.
\end{equation}
Taking the combination (\ref{n/4_eqnB})$ + $(\ref{n/4_eqnC})$- 2\cdot$(\ref{n/4_eqnD}) and 
simplifying gives
$4\alpha \geq n$, so $\alpha = n/4$ and we have equality in both (\ref{n/4_eqnB}) and 
(\ref{n/4_eqnD}). The equality in (\ref{n/4_eqnD}) implies $\beta_G \geq (n-1)/2$, 
so $\beta_G > \alpha = n/4$. It follows that $\xi(G') < \xi(G)$. Yet the equality in (\ref{n/4_eqnB})
tells us $\tilde{P}_3(G) = \tilde{P}_3(G')$, contradicting the optimality of $G$.
This proves Lemma \ref{n/4lemma}.
\end{proof}

\begin{lemma}\label{nonemptylemma} If $G$ is an optimal digraph and 
$uv$ is a shortest non-edge in $G$, then $N^+(v) \cap N^-(u) \neq \emptyset$.
\end{lemma}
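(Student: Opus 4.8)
The plan is to argue by contradiction using exactly the add-an-edge exchange that drove the proof of Lemma~\ref{n/4lemma}, now taking advantage of the stronger lower bound $\alpha_G > n/4$ that is already available for optimal digraphs. Write $\alpha = \alpha_G = d(u,v)$ and $c = |N^+(v)\cap N^-(u)|$, and suppose for a contradiction that $c = 0$. Since $uv$ is a shortest non-edge, Lemma~\ref{stayCIGobs} tells us that $G' = G + uv$ is again a $2$-free circular interval digraph, so $G'$ is an admissible competitor against the optimal $G$.

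Next I would record the effect of adding $uv$ on the number of induced $3$-vertex paths, which is the identical computation already carried out in Lemma~\ref{n/4lemma}: adding $uv$ destroys the $\alpha-1+c$ induced paths whose two ends are $u$ and $v$, and creates $\delta^-(u)+\delta^+(v)-2c$ new induced paths of the forms $w\d u\d v$ and $u\d v\d w$. With $c = 0$ this reads
\begin{equation*}
\tilde{P}_3(G') = \tilde{P}_3(G) - (\alpha-1) + (\delta^-(u)+\delta^+(v)).
\end{equation*}
Optimality of $G$ forces $\tilde{P}_3(G') \le \tilde{P}_3(G)$, hence $\delta^-(u)+\delta^+(v) \le \alpha-1$. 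On the other hand, Lemma~\ref{outdegbounds} gives $\delta^+(v),\delta^-(u) \ge \alpha_G-1 = \alpha-1$, so $\delta^-(u)+\delta^+(v) \ge 2\alpha-2$. Combining the two bounds yields $2\alpha-2 \le \alpha-1$, i.e.\ $\alpha \le 1$; but Lemma~\ref{n/4lemma} gives $\alpha = \alpha_G > n/4 \ge 1$ (as $n\ge 4$ for optimal digraphs), a contradiction. This is in fact the same $c=0$ subcase that appeared inside Lemma~\ref{n/4lemma}, except that there one could only deduce $\alpha=1$ and needed a further two-edge exchange to finish, whereas here the inequality $\alpha > n/4 > 1$ rules out $c=0$ outright.

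I do not expect a genuine obstacle here; the only point needing care is re-justifying the counting identity, namely that the $\alpha-1$ vertices strictly between $u$ and $v$ are precisely $N^+(u)\cap N^-(v)$ (which relies on the minimality of the non-edge $uv$, so that no shorter non-edge is created along the near arc) and that, when $c=0$, each newly formed path $u\d v\d w$ or $w\d u\d v$ really is induced rather than short-circuited by an edge between its ends. Since this is exactly the identity established for Lemma~\ref{n/4lemma}, no new verification is required, and the conclusion follows immediately once the bound $\alpha > n/4 > 1$ is invoked.
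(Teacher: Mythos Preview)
Your proposal is correct and follows essentially the same argument as the paper's own proof: assume $c=0$, add the edge $uv$, compare $\tilde P_3$ via the identity $\tilde P_3(G') = \tilde P_3(G) + \delta^-(u)+\delta^+(v) - (\alpha-1)$, combine optimality with Lemma~\ref{outdegbounds} to force $\alpha\le 1$, and contradict Lemma~\ref{n/4lemma}. The only difference is expository: you spell out the counting justification and the comparison with the $c=0$ subcase of Lemma~\ref{n/4lemma}, whereas the paper simply writes down the identity and the contradiction.
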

\begin{proof} Suppose not, and let $uv$ be a non-edge of length $\alpha_G$ in an optimal digraph $G$ on $n$ vertices. 
Then by Lemma \ref{n/4lemma} and the fact that $n \geq 4$, $\alpha_G > n/4 \geq 1$.
Let $G' = G + uv$ and $\alpha = \alpha_G$. Then 
\begin{equation*}
\tilde{P}_3(G') = \tilde{P}_3(G) + \delta^+(v) + \delta^-(u) - (\alpha-1).
\end{equation*}
Since $\tilde{P}_3(G') \leq \tilde{P}_3(G)$ by optimality, $\delta^+(v) + \delta^-(u) \leq \alpha-1$. But 
by Lemma \ref{outdegbounds}, $\delta^+(v), \delta^-(u) \geq \alpha -1$. Then $2\alpha-2 \leq \alpha -1$, or
$\alpha \leq 1$, a contradiction. 
This proves Lemma \ref{nonemptylemma}.
\end{proof}

We can now prove that in an optimal digraph $G$, $\alpha_G + \beta_G$ is approximately $3|V(G)|/4$. Let
\begin{equation*}
\epsilon_G = 
\begin{cases} 
0 & \beta_G > \alpha_G \\
1 & \beta_G \leq \alpha_G. 
\end{cases}
\end{equation*}

\begin{lemma}\label{alphabetalemma} If $G$ is an optimal digraph on $n$ vertices, then 
\begin{displaymath}
\frac{3n}{4} - \frac{1}{2} - \frac{\epsilon_G}{4} < \alpha_G + \beta_G < \frac{3n}{4} + \frac{1}{2} + \frac{\epsilon_G}{4}.
\end{displaymath}
Additionally, if some vertex is incident with a longest edge, but with no shortest non-edge, 
then $\alpha_G + \beta_G < 3n/4 + \epsilon_G /4$, and if some vertex is incident with a shortest
non-edge but with no longest edge, then $\alpha_G + \beta_G > 3n/4 - \epsilon_G /4$.
\end{lemma}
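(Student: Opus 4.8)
The plan is to prove the two-sided estimate by first establishing the sharp bounds $3n/4-1/2\le \alpha_G+\beta_G\le 3n/4+1/2$ from two local exchange arguments, and then to recover the strict inequalities, the $\epsilon_G/4$ corrections, and the two refinements from the secondary optimality condition (minimality of $\xi(G)$). Write $\alpha=\alpha_G$, $\beta=\beta_G$. The upper bound comes from deleting a longest edge and the lower bound from inserting a shortest non-edge, exactly the operations analysed in Lemmas \ref{n/2lemma} and \ref{n/4lemma}.

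For the upper bound, fix a longest edge $uv$ and set $c=|N^+(v)\cap N^-(u)|$. Optimality gives $\tilde P_3(G\setminus uv)\le \tilde P_3(G)$, so by the deletion formula $\delta^-(u)+\delta^+(v)\ge \beta-1+3c$; combining this with the disjointness inequality (\ref{n/2_eqnB}), which rearranges to $\delta^+(v)+\delta^-(u)\le n-1-\beta+c$, yields $2\beta+2c\le n$. Feeding the degree lower bound $\delta^-(u),\delta^+(v)\ge \alpha-1$ of Lemma \ref{outdegbounds} back into the disjointness inequality gives $2\alpha-2\le n-1-\beta+c\le 3n/2-1-2\beta$, that is, $\alpha+\beta\le 3n/4+1/2$.

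For the lower bound, fix a shortest non-edge $wx$ and set $c''=|N^+(x)\cap N^-(w)|$. By Lemma \ref{nonemptylemma} we have $c''\ge 1$, so the partition (\ref{n/4_eqnC}) is exact and $\delta^+(x)+\delta^-(w)=n-1-\alpha+c''$. Optimality of $G+wx$ gives $\delta^+(x)+\delta^-(w)\le \alpha-1+3c''$, hence $n\le 2\alpha+2c''$, while the degree upper bound $\delta^+(x),\delta^-(w)\le\beta$ of Lemma \ref{outdegbounds} gives $c''\le \alpha+2\beta+1-n$. Eliminating $c''$ produces $\alpha+\beta\ge 3n/4-1/2$.

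It remains to sharpen these to the stated strict, $\epsilon_G$-adjusted form and to prove the two refinements. The refinements rest on the correspondence that $\delta^+(v)=\alpha-1$ (resp. $\delta^-(v)=\alpha-1$) forces $v$ to be the tail (resp. head) of a shortest non-edge, and $\delta^+(v)=\beta$ (resp. $\delta^-(v)=\beta$) forces $v$ to be incident with a longest edge, which is immediate from the interval structure modulo a degenerate parity case where $n-\alpha=\beta$. Hence if a vertex incident with a longest edge meets no shortest non-edge, the relevant degree bound improves by one and the upper-bound computation tightens to $\alpha+\beta\le 3n/4$; symmetrically for the lower bound. When $\beta\le\alpha$ (so $\epsilon_G=1$) the extra quarter-unit of slack already places the computed bounds strictly inside the claimed window. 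When $\beta>\alpha$ (so $\epsilon_G=0$) I must exclude the boundary values $\alpha+\beta=3n/4\pm1/2$ (and $3n/4$ in the refined case): at such an equality \emph{every} inequality above is tight, so in particular $\tilde P_3$ is left unchanged by deleting the chosen longest edge (resp. adding the chosen shortest non-edge); but since $\beta>\alpha$ there is a non-edge shorter than $\beta$ (resp. an edge longer than $\alpha$), so this modification strictly decreases $\xi(G)$, contradicting minimality of $\xi$ among optimal digraphs. The main obstacle is this last step: correctly tracking which inequalities are simultaneously tight at each boundary value, verifying the degree-extremality/incidence dictionary in the degenerate parity situations, and confirming that the $\xi$-decrease argument applies in every case; the two principal estimates are routine once the exchange formulas and degree bounds are in hand.
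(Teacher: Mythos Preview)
Your plan is correct and follows essentially the same exchange/$\xi$-minimality approach as the paper. The one noteworthy difference is in the upper bound: the paper splits into two cases according to whether some vertex is non-adjacent to both endpoints of the chosen longest edge (using an exact partition count in the first case and a cruder inequality in the second), whereas you work with the disjointness inequality~(\ref{n/2_eqnB}) throughout, combine it with the optimality inequality to obtain $2\beta+2c\le n$, and then feed $c\le n/2-\beta$ back in. This avoids the case split and is a genuine streamlining, though not a different method. Your treatment of the lower bound, the refinements (via the dictionary $\delta^+(v)=\alpha-1\Leftrightarrow v$ is tail of a shortest non-edge, etc.), and the strictness via $\xi$-minimality all match the paper; the ``degenerate parity'' worry you flag does not actually arise, since $\alpha+\beta\le 3n/4+1/2$ already forces $n-\alpha>\beta$ for $n\ge 4$.
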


\begin{proof}
Let $G$ be an optimal digraph on $n$ vertices with 
$\alpha = \alpha_G$, $\beta = \beta_G$, and $\epsilon = \epsilon_G$. 
\setcounter{step}{0}
\begin{step}\label{abupper} $\alpha + \beta < 3n/4 + 1/2 + \epsilon /4$, and if some vertex is 
incident with a longest edge but no shortest non-edge, then $\alpha + \beta < 3n/4 + \epsilon /4$.
\end{step}
If $G$ has no edges, then $\beta = 0$, and since $\alpha \leq \beta + 1$ by Lemma \ref{outdegbounds}, we have 
$\alpha + \beta \leq 1 \leq 3n/4$ (since $n \geq 4$), as required. Thus $E(G) \neq \emptyset$.
Let $uv$ be a longest edge in $G$ and $G' = G \setminus uv$. For notational convenience, 
set $\delta^+(v) = a$, $\delta^-(u) = b$, and $|N^+(v) \cap N^-(u)| = c$. 
The number of induced $3$-vertex paths in $G'$ is
$$\tilde{P}_3(G') =  \tilde{P}_3(G) + (b-c) + (a -c) + (\beta -1 + c).$$
Since $G$ is optimal, $\tilde{P}_3(G') \leq \tilde{P}_3(G)$, and strict inequality holds if $\beta > \alpha$ 
(because then $\xi(G') < \xi(G)$), it follows that:
\begin{equation}\label{lossgain1}
3c < a + b - \beta + 1 + \epsilon.
\end{equation}

Suppose that no vertex is non-adjacent to both $u$ and $v$.  Then 
counting the vertices, we have $(c + \beta -1)+ (a-c) + (b-c) + 2 = n$, or 
$c = a + b + \beta +1 -n$. Substituting for $c$ in (\ref{lossgain1}) gives 
$2(a + b + 1) + 3(\beta -n) < \epsilon - \beta$, or 
\begin{equation*}
4\beta + 2 + 2(a + b) < 3n + \epsilon.  
\end{equation*}
Since $a,b \geq \alpha -1$ by Lemma \ref{outdegbounds}, it follows that 
$4\beta + 2 + (4\alpha - 4) < 3n + \epsilon$, or 
\begin{displaymath}
\alpha + \beta < 3n/4 + 1/2 + \epsilon/4.
\end{displaymath}
Note that if $a \geq \alpha$ or $b \geq \alpha$ (one of the endpoints of $uv$ is not incident with a shortest non-edge),
we have $4\beta + 2 + (4\alpha -2) < 3n + \epsilon$, or
\begin{displaymath}
\alpha + \beta < 3n/4 + \epsilon/4.
\end{displaymath}

Thus we may assume there is a vertex $y$ non-adjacent to $u$ and $v$. In this case, 
$a + b + (\beta -1) + 3 \leq n$, so $2\alpha + \beta  \leq n$. 
We know $\alpha \geq (n+1)/4$ by Lemma \ref{n/4lemma} (since $\alpha \in \Z$), proving 
\begin{displaymath}
\alpha + \beta < 3n/4 - 1/4 < 3n/4 + \epsilon /4.
\end{displaymath}
This proves Step \ref{abupper}.~\bbox

\begin{step}\label{ablower} $\alpha + \beta > 3n/4 -1/2-\epsilon /4$, and if some vertex 
is incident with a shortest non-edge but no longest edge, then $\alpha + \beta > 3n/4 - \epsilon /4$.
\end{step}
If $G$ has no non-edges, then $\alpha = \infty$, yet $\alpha \leq \beta + 1 < n/2 + 1$ by Lemmas \ref{outdegbounds} and
\ref{n/2lemma}, a contradiction. Then let $uv$ be a shortest non-edge in $G$, and $G' = G + uv$. 
For notational convenience, set $\delta^+(v) = a$, $\delta^-(u) = b$, 
and $|N^+(v) \cap N^-(u)| = c$. The number of induced $3$-vertex paths in $G'$ is
$$\tilde{P}_3(G') = \tilde{P}_3(G) + (b-c) + (a-c) - (c + \alpha -1).$$
Since $G$ is optimal, $\tilde{P}_3(G') \leq \tilde{P}_3(G)$, and strict inequality holds if $\beta > \alpha$ 
(because then $\xi(G') < \xi(G)$), it follows that:
\begin{equation}\label{lossgain2}
a + b - \alpha + 1 < 3c + \epsilon.
\end{equation} 
We know $\alpha + 1 + a + b-c = n$ by Lemma \ref{nonemptylemma}. 
We solve for $c = \alpha + a + b + 1 -n$, and substitute into equation (\ref{lossgain2}). 
This gives $a + b - \alpha + 1 -\epsilon < 3(\alpha + a + b + 1 -n)$, or
\begin{equation*}
3n < 4\alpha + 2 (a+b) + 2 + \epsilon.
\end{equation*}
Since $a,b \leq \beta$ by Lemma \ref{outdegbounds}, 
$3n < 4(\alpha + \beta) + 2 + \epsilon$, or 
\begin{displaymath}
\alpha + \beta > 3n/4 - 1/2 - \epsilon/4.
\end{displaymath}
If $a < \beta$ or $b < \beta$ (one of the endpoints of $uv$ is not incident with a longest edge), 
we instead have $3n <  4\alpha + 2\beta + 2(\beta-1) + \epsilon$, or 
\begin{displaymath}
\alpha + \beta > 3n/4 - \epsilon/4, 
\end{displaymath}
as desired. This proves Step \ref{ablower}, and completes the proof of Lemma \ref{alphabetalemma}.
\end{proof}

We define $\gamma_G = 4(\alpha_G + \beta_G)- 3n$, where $|V(G)| = n$.

\begin{lemma}\label{gammalemma} Let $G$ be an optimal digraph on $n$ vertices with $\beta_G > \alpha_G$. Then 
$-1 \leq \gamma_G \leq 1.$ Furthermore, $\gamma_G = -1$ if some vertex is incident 
with a longest edge and with no shortest non-edge, and 
$\gamma_G = 1$ if some vertex is incident with a shortest non-edge and with no longest edge.
\end{lemma}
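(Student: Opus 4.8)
The plan is to reduce everything to Lemma~\ref{alphabetalemma} by exploiting that $\gamma_G$ is an integer. Since $\alpha_G$ and $\beta_G$ are lengths, they are integers, and hence $\gamma_G = 4(\alpha_G + \beta_G) - 3n$ is an integer. Moreover, because $\beta_G > \alpha_G$ we are in the case $\epsilon_G = 0$, so every occurrence of $\epsilon_G$ in Lemma~\ref{alphabetalemma} vanishes.

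First I would establish the two-sided bound. Applying Lemma~\ref{alphabetalemma} with $\epsilon_G = 0$ gives $3n/4 - 1/2 < \alpha_G + \beta_G < 3n/4 + 1/2$; multiplying through by $4$ and subtracting $3n$ yields $-2 < \gamma_G < 2$. Since $\gamma_G \in \Z$, this forces $\gamma_G \in \{-1, 0, 1\}$, i.e.\ $-1 \leq \gamma_G \leq 1$, as claimed.

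Next I would treat the two boundary cases using the refined clauses of Lemma~\ref{alphabetalemma}. Suppose some vertex is incident with a longest edge but with no shortest non-edge. Then the sharpened conclusion of Step~\ref{abupper} gives $\alpha_G + \beta_G < 3n/4 + \epsilon_G/4 = 3n/4$, so $\gamma_G < 0$; combined with $\gamma_G \geq -1$ and integrality this forces $\gamma_G = -1$. Symmetrically, if some vertex is incident with a shortest non-edge but with no longest edge, the sharpened conclusion of Step~\ref{ablower} gives $\alpha_G + \beta_G > 3n/4 - \epsilon_G/4 = 3n/4$, so $\gamma_G > 0$, and with $\gamma_G \leq 1$ and integrality this forces $\gamma_G = 1$.

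There is no real obstacle here: the mathematical content is entirely carried by Lemma~\ref{alphabetalemma}, and the only step requiring any thought is the observation that $\gamma_G$ is an integer lying in an open interval of width $4$ (hence in $\{-1,0,1\}$), together with the one-sided refinements that pin down the boundary values exactly. The rest is routine bookkeeping.
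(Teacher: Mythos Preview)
Your proposal is correct and essentially identical to the paper's proof: both note that $\epsilon_G=0$ when $\beta_G>\alpha_G$, apply Lemma~\ref{alphabetalemma} to trap $\gamma_G$ strictly between $-2$ and $2$, invoke integrality to get $-1\le\gamma_G\le 1$, and then use the refined clauses of Lemma~\ref{alphabetalemma} to force $\gamma_G<0$ or $\gamma_G>0$ in the two special cases. The only cosmetic difference is that you cite the internal Steps~\ref{abupper} and~\ref{ablower} directly, whereas the paper cites the statement of Lemma~\ref{alphabetalemma} itself; the content is the same.
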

\begin{proof}
Since $\gamma_G = 4(\alpha_G + \beta_G) -3n$ and $\epsilon_G = 0$, Lemma \ref{alphabetalemma} implies 
\begin{displaymath}
-2 = 4(3n/4 - 1/2) - 3n < \gamma_G < 4(3n/4 + 1/2) - 3n = 2.
\end{displaymath}
Since $\gamma_G$ is an integer, this is equivalent to $-1 \leq \gamma_G \leq 1$. Furthermore, 
if some vertex is incident with a longest edge and no shortest non-edge, Lemma \ref{alphabetalemma}
proves $\gamma_G < 0$. Since $\gamma_G \geq -1$, this implies $\gamma_G = -1$. Similarly, 
if some vertex is incident with a shortest non-edge and no longest edge, 
we have $\gamma_G > 0$, which combined with $\gamma_G \leq 1$ implies $\gamma_G = 1$. This proves Lemma \ref{gammalemma}.
\end{proof}

We now prove several more facts about optimal digraphs. We say a set of vertices 
$X$ in a digraph $G$ is {\it stable} if $uv \notin E(G)$ for all $u,v \in X$; that is, $G|X$ has no edges.  

\begin{lemma}\label{stablesetlemma} 
If $G$ is an optimal digraph, it has no stable set of size at least $3$.
\end{lemma}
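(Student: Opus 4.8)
The plan is to argue by contradiction, taking a stable set $S$ of size at least $3$ in an optimal digraph $G$ and producing a modification of $G$ that either strictly increases $\tilde{P}_3$ or keeps it constant while strictly decreasing $\xi$, in either case contradicting optimality. Since a stable set of size $3$ contains a stable set of size exactly $3$ (any three of its vertices), it suffices to rule out a stable triple $\{x,y,z\}$, appearing in this clockwise cyclic order. The three arcs of the circle determined by these vertices correspond to three non-edges, so all three pairs $xy$, $yz$, $zx$ are non-edges, each of length at least $\alpha_G$. Because the three lengths $d(x,y)$, $d(y,z)$, $d(z,x)$ sum to $n$ (they partition the circle, up to the $+1$ conventions in the definition of $d$), and each is at least $\alpha_G > n/4$ by Lemma \ref{n/4lemma}, we get $3\alpha_G < n$, i.e.\ $\alpha_G < n/3$. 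This is the structural fact I would exploit.

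\emph{First} I would fix $\{x,y,z\}$ so that, among stable triples, the shortest of its three gaps is as short as possible; since each gap is a non-edge of length $\ge \alpha_G$, the shortest gap equals $\alpha_G$ in the extremal case, so I may assume $d(x,y)=\alpha_G$ (relabelling). \emph{Next}, I would add the edge $xy$ (of length exactly $\alpha_G$) to form $G' = G + xy$. By Lemma \ref{stayCIGobs} this remains a $2$-free circular interval digraph. Writing $a = \delta^+(y)$, $b = \delta^-(x)$, and $c = |N^+(y)\cap N^-(x)|$, the same bookkeeping used in Lemmas \ref{n/4lemma} and \ref{nonemptylemma} gives
\begin{equation*}
\tilde{P}_3(G') = \tilde{P}_3(G) + (a - c) + (b - c) - (c + \alpha_G - 1).
\end{equation*}
Optimality forces $\tilde{P}_3(G') \le \tilde{P}_3(G)$, hence $a + b - \alpha_G + 1 \le 3c$. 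The goal is to combine this with the stability of $\{x,y,z\}$ to force a contradiction. The key extra input is that $z$ witnesses a large empty region: because $xy$, $yz$, $zx$ are all non-edges and the three gaps sum to $n$ with each exceeding $n/4$, the vertex counts around the circle become overconstrained. In particular $z$ lies in the clockwise arc from $y$ to $x$, so $z \notin N^+(y)$ and $z \notin N^-(x)$, which limits how large $c$ can be relative to the available vertices.

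\emph{The main obstacle} will be converting the purely degree-based inequality $a + b - \alpha_G + 1 \le 3c$ into something false, and I expect this is where the three-gap structure must be used carefully. The cleanest route is a vertex-partition count analogous to (\ref{n/4_eqnC}): the sets $N^+(y)$, $N^-(x)\setminus N^+(y)$, the interior $\{w : y,w,x \text{ clockwise}, w\neq y,x\}$, and $\{x,y\}$ partition $V(G)$, giving $a + (b-c) + (\alpha_G - 1) + 2 = n$ exactly as in Lemma \ref{nonemptylemma}. Solving, $c = a + b + \alpha_G + 1 - n$, and substituting into $a + b - \alpha_G + 1 \le 3c$ yields $3n \le 4\alpha_G + 2(a+b) + 2$; since $a,b \le \beta_G$ by Lemma \ref{outdegbounds}, this forces $\alpha_G + \beta_G \ge (3n-2)/4$, which by Lemma \ref{alphabetalemma} is already tight and leaves essentially no slack. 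To close the argument I would feed in the third non-edge: the interior vertex $z$ sits in the clockwise arc from $y$ to $x$ but is non-adjacent to both, so it lies in neither $N^+(y)$ nor $N^-(x)$, and moreover $zx, yz$ being non-edges of length $\ge \alpha_G$ caps the interior at fewer than $\alpha_G - 1$ vertices genuinely joined to $x$ or $y$. This strict deficit should sharpen $c = a+b+\alpha_G+1-n$ to a strict inequality, turning $3n \le 4\alpha_G + 2(a+b)+2 \le 4(\alpha_G+\beta_G)+2$ into a violation of the upper bound $\alpha_G + \beta_G < 3n/4 + 1/2$ from Lemma \ref{alphabetalemma} — the contradiction I am after. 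Alternatively, whenever the $\tilde{P}_3$ inequality holds with equality, the presence of the stable triple (hence a short non-edge witnessed by $z$ while $\beta_G > \alpha_G$ persists) makes $\xi(G') < \xi(G)$, again contradicting optimality. Either way, no optimal digraph admits a stable triple, which proves Lemma \ref{stablesetlemma}.
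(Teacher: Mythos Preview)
Your argument has two genuine gaps, and the contradiction you sketch at the end does not materialize.

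\textbf{The assumption $d(x,y)=\alpha_G$ is unjustified.} Choosing the stable triple whose shortest gap $k$ is minimal only gives $k\ge \alpha_G$; nothing forces equality. This matters in two places: your appeal to Lemma~\ref{stayCIGobs} to conclude that $G+xy$ is a circular interval digraph requires $xy$ to be a \emph{shortest} non-edge, and Lemma~\ref{outdegbounds} only yields $a,b\ge \alpha_G-1$, not $a,b\ge k-1$. The paper handles both issues by working with $k$ directly: minimality of $k$ (applied to the triples $\{x,z',w\}$ and $\{z',y,w\}$ for $z'$ in the short arc) forces every short-arc vertex to be joined to both $x$ and $y$, so $G+xy$ is a circular interval digraph; and minimality of $k$ (applied to the triple $\{y,a,x\}$ where $a$ is the first non-out-neighbour of $y$) gives $\delta^+(y)\ge k-1$, and symmetrically $\delta^-(x)\ge k-1$.

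\textbf{The partition fails, and ``sharpening'' goes the wrong way.} Your four sets do \emph{not} partition $V(G)$: the third vertex $z$ lies in the long arc from $y$ to $x$ and is adjacent to neither, so $z\notin N^+(y)\cup N^-(x)$. You notice this later, but the correction gives $a+(b-c)+(\alpha_G-1)+2\le n-1$, i.e.\ a \emph{lower} bound $c\ge a+b+\alpha_G+2-n$. Substituting a lower bound on $c$ into $a+b-\alpha_G+1\le 3c$ yields nothing; you would need an upper bound on $c$ to push $\alpha_G+\beta_G$ above $3n/4+1/2$. Your alternative via $\xi(G')<\xi(G)$ also fails, since it presupposes $\beta_G>\alpha_G$, which is not available here (and is in fact false for optimal digraphs by Theorem~\ref{optimalbetaalphathm}, proved later).

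The observation you are missing is much stronger than a strict inequality: because $N^+(y)$ and $N^-(x)$ are consecutive intervals in the long arc and $z$ lies strictly between them, they are \emph{disjoint}, so $c=0$. With $c=0$, optimality gives $\delta^+(y)+\delta^-(x)\le k-1$; combined with $\delta^+(y),\delta^-(x)\ge k-1$ from the previous paragraph, this forces $2(k-1)\le k-1$, i.e.\ $k\le 1$, contradicting $k\ge 2$ (which follows from $\alpha_G>n/4\ge 1$). That is exactly the paper's proof.
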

\begin{proof}
Suppose not and let $|V(G)| = n$. Take a stable set $\{u,v,w\}$ so that $d(u,v)$ is minimum, and let 
$k = d(u,v)$. Then by Lemma \ref{n/4lemma}, $\alpha_G > n/4 \geq 1$, so every 
vertex has at least one out-neighbor. It follows that $k \geq 2$. 
Let $G' = G + uv$. Then $G'$ is a circular interval digraph, since $d(u,v)$'s  minimality
implies $uw$ and $wv$ are edges for all $w$ between $u$ and $v$ in the circular order. 
Since $|N^+(v)\cap N^-(u)| = 0$, it follows that
\begin{equation*}
\tilde{P}_3(G') = \tilde{P}_3(G) + \delta^+(v) + \delta^-(u) - (k-1).
\end{equation*}
Since $\tilde{P}_3(G') \leq \tilde{P}_3(G)$ by optimality, $\delta^+(v) + \delta^-(u) \leq k-1$.

Let $y$ be the furthest out-neighbor of $v$ in $G$. Then $v$ is non-adjacent to the next vertex
in the circular order (call it $a$), and the non-edge $va$ is part of a stable set of size three, 
namely $\{v,a,u\}$ (if $u$ were adjacent to $a$ then there could not be a vertex 
$w$ to which both $u$ and $v$ were non-adjacent). 
This means the length of $va$ is at least $k$ by choice of $uv$, so $\delta^+(v) \geq k-1$. An
analogous argument shows $d^-(u) \geq k-1$. Since $\delta^+(v) + \delta^-(u) \leq k-1$, 
it follows that $k=1$, a contradiction. This proves Lemma \ref{stablesetlemma}.
\end{proof}

We say a pair of vertices $uv$ in an optimal digraph $G$ is {\em extreme} if $uv$ is a longest edge 
or a shortest non-edge in $G$. 

\begin{lemma}\label{extremelemma} Let $G$ be an optimal digraph with $\beta_G > \alpha_G$, and 
$u,v,w$ vertices appearing in clockwise order. Then not all of $uv,vw,wu$ are extreme pairs. Additionally, 
if some two of them are extreme, then either all three pairs are edges, or two are edges and the third is a
shortest non-edge.
\end{lemma}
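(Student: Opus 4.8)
The plan is to prove both assertions by a case analysis driven by the lengths of the three pairs. Since $u,v,w$ occur in clockwise order, their clockwise lengths satisfy $d(u,v)+d(v,w)+d(w,u)=n$, and because $\beta_G>\alpha_G$ every extreme pair has length either $\alpha_G$ (in which case it is a shortest non-edge) or $\beta_G$ (in which case it is a longest edge). Throughout I would use the standing facts $\alpha_G>n/4$ (Lemma \ref{n/4lemma}), $\beta_G<n/2$ (Lemma \ref{n/2lemma}), the degree bounds $\alpha_G-1\le\delta^\pm\le\beta_G$ (Lemma \ref{outdegbounds}), and $\gamma_G:=4(\alpha_G+\beta_G)-3n\in\{-1,0,1\}$ together with its refinement (Lemma \ref{gammalemma}). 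By a cyclic relabelling I may place the three vertices at convenient positions, e.g.\ $0,\alpha_G,2\alpha_G$ whenever two of the lengths equal $\alpha_G$.

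For the first assertion (not all three extreme), I would split on the number $k$ of pairs of length $\beta_G$. If $k=0$ all three pairs are non-edges, so $\{u,v,w\}$ is a stable set, contradicting Lemma \ref{stablesetlemma}. If $k\ge 2$, then two of the three lengths equal $\beta_G>\alpha_G$, and substituting into $\gamma_G=4(\alpha_G+\beta_G)-3n$ gives $\gamma_G<-n/3<-1$, contradicting Lemma \ref{gammalemma}. The remaining case $k=1$ (lengths $\alpha_G,\alpha_G,\beta_G$) is the only delicate one here: placing the long edge at $wu$ forces $\delta^+(v)=\alpha_G-1$, so $N^+(v)$ lies in positions $\le 2\alpha_G-1$, while $\delta^-(u)\le\beta_G=n-2\alpha_G$ forces $N^-(u)$ into positions $\ge 2\alpha_G$; hence $N^+(v)\cap N^-(u)=\emptyset$, contradicting Lemma \ref{nonemptylemma} applied to the shortest non-edge $uv$.

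For the second assertion I may assume, by the first, that exactly two pairs are extreme, and I classify by their types. If both are longest edges, I would show the third pair cannot be a non-edge: such a non-edge would have length strictly greater than $\alpha_G$ (length $\alpha_G$ would make it extreme), but the length equation together with $\alpha_G>n/4$, $\alpha_G<\beta_G$ and the integrality of $\alpha_G$ leaves no admissible value; hence all three pairs are edges. If the two extreme pairs are one longest edge and one shortest non-edge, the third length equals $(n-\gamma_G)/4$, and the same length-plus-integrality argument shows it cannot be a non-edge, so it is an edge and we land in the ``two edges, third a shortest non-edge'' case. If both extreme pairs are shortest non-edges and the third pair is also a non-edge, then again $\{u,v,w\}$ is stable, contradicting Lemma \ref{stablesetlemma}.

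The hard part is the last sub-case: two shortest non-edges (say $uv,vw$) together with an edge $wu$ of length $m=n-2\alpha_G<\beta_G$, which I must rule out entirely. Here the two non-edges force $\delta^+(v)=\delta^-(v)=\alpha_G-1$, so every edge at $v$ is shorter than $\beta_G$; thus $v$ meets a shortest non-edge but no longest edge, and Lemma \ref{gammalemma} gives $\gamma_G=1$. Applying Lemma \ref{nonemptylemma} to $uv$ and computing $c=|N^+(v)\cap N^-(u)|=\delta^-(u)-m$, the passage to $G+uv$ (a legitimate $2$-free circular interval digraph by Lemma \ref{stayCIGobs}) changes the count, by the formula used in Step \ref{ablower}, by $\tilde P_3(G+uv)-\tilde P_3(G)=\delta^+(v)+\delta^-(u)-3c-(\alpha_G-1)=3m-2\delta^-(u)$; optimality of $G$ forces this to be $\le 0$, so $\delta^-(u)\ge 3m/2$ and hence $\beta_G\ge 3m/2$. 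Combined with $\alpha_G+\beta_G=(3n+1)/4$ and $\alpha_G<\beta_G$, this pins $\alpha_G$ into $[(3n-1)/8,(3n+1)/8)$, forcing $8\alpha_G\in\{3n-1,3n\}$ and so $n\equiv 0$ or $3\pmod 8$; but $\gamma_G=1$ forces $n\equiv 1\pmod 4$, a contradiction. I expect this final integrality-versus-optimality squeeze to be the main obstacle, since the configuration survives every purely metric test ($\gamma_G$, degree bounds, stable sets) and is eliminated only by feeding the optimality inequality into the arithmetic of $\alpha_G,\beta_G,\gamma_G$.
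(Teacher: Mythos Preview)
Your proof is correct, and its overall organization differs usefully from the paper's.  The paper proceeds by first establishing two structural sub-lemmas --- ``no vertex lies in two shortest non-edges'' and ``no three longest edges form a triangle'' --- and then reads off both assertions from these.  You instead run a direct case analysis on the number and types of extreme pairs in the triple.  Your $k\ge 2$ case (two or three longest edges among the extremes) is handled more cleanly than in the paper: substituting the length equation into $\gamma_G$ gives $\gamma_G<-n/3$, immediately contradicting Lemma~\ref{gammalemma}, whereas the paper argues separately via $3\beta_G=n$ and Lemma~\ref{n/4lemma}.  Your $k=1$ case is also neat and genuinely different: using $2\alpha_G+\beta_G=n$ to see that $N^+(v)$ and $N^-(u)$ are disjoint and then invoking Lemma~\ref{nonemptylemma} is shorter than the paper's route.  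Your cases (a) and (b) for the second assertion boil down to the same ``no integer in $(n/4,(n+1)/4)$'' observation as the paper, just packaged differently.  The one place where the paper's argument is tidier is your final case (d): once you have $\beta_G\ge 3m/2$ and $\gamma_G=1$, you deduce $\alpha_G\in[(3n-1)/8,(3n+1)/8)$ and hence $\beta_G=(3n+1)/4-\alpha_G\in((3n+1)/8,(3n+3)/8]$; since $\alpha_G<\beta_G$ are two integers in an interval of length $1/2$, you are done immediately --- the detour through $8\alpha_G\in\{3n-1,3n\}$ and congruences modulo $8$ versus $n\equiv 1\pmod 4$ is correct but unnecessary.
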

\begin{proof} Let $G$ be an optimal digraph on $n$ vertices with $\beta_G > \alpha_G$. 
We begin by proving that no vertex is in two shortest non-edges.
Assume not, and let $v$ be a vertex with $uv$ and $vw$ non-edges of length $\alpha_G$. 
Hence $u,v,w$ appear in clockwise order. Then by Lemma \ref{stablesetlemma}, $u$ and $w$ must be adjacent.
If $uw \in E(G)$, $G$ is not a circular interval graph, a contradiction.  
Thus $wu \in E(G)$, and say it has length $L$. Let $G' = G  + vw$. 
This is a circular interval digraph by Lemma \ref{stayCIGobs}. Then 
\begin{equation*}
\tilde{P}_3(G') = \tilde{P}_3(G) - (\alpha_G - 1 + \delta^+(w) - L) + L + (\alpha_G -1)-(\delta^+(w)-L).
\end{equation*}
Since $\tilde{P}_3(G') \leq \tilde{P}_3(G)$ by optimality, $3L-2\delta^+(w) \leq 0$, or 
\begin{equation*}
L \leq \frac{2\delta^+(w)}{3} \leq \frac{2\beta_G}{3}.
\end{equation*}
Now since $2\alpha_G + L = n$, we have 
\begin{equation}\label{extreme:abnineq}
n \leq 2\alpha_G + \frac{2\beta_G}{3}. 
\end{equation}
We note that since $\beta_G > \alpha_G$ and $v$ is 
incident with two shortest non-edges, $v$ cannot be incident with a longest edge. Then Lemma \ref{gammalemma} gives
$\gamma_G = 1$, or 
\begin{equation}\label{extreme:3n+1/4}
\alpha_G + \beta_G = \frac{3n+1}{4}.
\end{equation}
Combining equations (\ref{extreme:abnineq}) and (\ref{extreme:3n+1/4}), we have 
\begin{displaymath}
n \leq \frac{2(\alpha_G + \beta_G)}{3} + \frac{4\alpha_G}{3} = \frac{3n+1}{6} + \frac{4\alpha_G}{3}. 
\end{displaymath}
This implies
\begin{equation*}
\alpha_G \geq \frac{3n-1}{8}. 
\end{equation*}
Then since $\alpha_G + \beta_G = (3n+1)/4$, $\beta_G \leq (3n+3)/8$. Yet $\beta_G > \alpha_G$, and both are integers. 
This implies there are two integers in the range $[(3n-1)/8, (3n+3)/8]$, a contradiction. This proves no 
vertex is incident with two shortest non-edges.

We now show no triple of vertices forms three longest edges. Let $u,v,w$ be in clockwise order, 
and assume $uv,vw,wu$ are all edges of length $\beta_G$. 
Then $3\beta_G = n$. Since $\beta_G > \alpha_G$, this implies $n > 2\beta_G + \alpha_G$. Now, 
Lemma \ref{alphabetalemma} gives $\alpha_G + \beta_G  > 3n/4 - 1/2$ (since $\epsilon_G = 0$), so 
$n > \beta_G + 3n/4 - 1/2$, or $\beta_G < n/4 + 1/2$. Then $\alpha_G < \beta_G < (n+2)/4$ implies
$\alpha_G < (n+1)/4$, a contradiction to Lemma \ref{n/4lemma}. 

This proves that for $u,v,w$ in clockwise order, not all of $uv,vw,wu$ are extreme pairs. Additionally, 
it proves that if two are extreme pairs, at least one must be a longest edge. To complete the proof of the theorem, 
we need to show that if two of $uv,vw,wv$ are longest edges, the third pair must be an edge and that 
if there is a shortest non-edge among $uv,vw,wv$, the other two pairs must be edges.

We first prove there do not exist $u,v,w \in V(G)$ so that $uv, vw$ are edges of length $\beta_G$ and 
$wu$ is a non-edge. Suppose such $u,v,w$ exist. It follows that $u,v,w$ are in clockwise order. Since
all three pairs are not extreme, $wu$ has length $L > \alpha_G$. Then $2\beta_G + L = n$, or 
$2\beta_G + \alpha_G < n$. Since $v$ is incident with two longest edges, it cannot be incident with a shortest non-edge, 
and Lemma \ref{gammalemma} implies $\gamma_G = -1$, or $\alpha_G + \beta_G = (3n-1)/4$. We then 
have $(3n-1)/4 + \beta_G < n$, or $\beta_G < (n+1)/4$. Since $\alpha_G < \beta_G$, this contradicts Lemma \ref{n/4lemma}.

Finally, suppose there are $u,v,w \in V(G)$ so that $uv,vw$, and $wu$ consist of a shortest non-edge, a longest edge, and
a non-edge of length $L > \alpha_G$. Then $\alpha_G + \beta_G + L = n$ implies $2\alpha_G + \beta_G < n$. 
Since $\alpha_G + \beta_G > (3n-2)/4$ by Lemma \ref{alphabetalemma}, we see that $\alpha_G + (3n-1)/4 < n$, 
or $\alpha_G < (n+1)/4$. Again, this contradicts Lemma \ref{n/4lemma}.

This proves Lemma \ref{extremelemma}.
\end{proof}

Given an optimal digraph $G$, let $S = v_1, f_1$, $v_2, f_2$, $v_3, f_3, \dots, f_k, v_{k+1}$ 
be a sequence where the $v_i$ are vertices of $G$, 
and the $f_i = v_iv_{i+1}$ are extreme pairs of $G$. We say $S$ is an {\em alternating sequence} 
if it satisfies the conditions: 
\begin{enumerate}
\item $f_i \neq f_j$ for $i \neq j$.
\item For $1 \leq i \leq k-1$, if $f_i$ is an edge, then $f_{i+1}$ is a non-edge.
\item For $1 \leq j \leq k-1$, if $f_j$ is a non-edge, then $f_{j+1}$ is an edge.
\end{enumerate}
In other words, $S$ is an alternating sequence of longest edges and shortest non-edges. 
Define $X_S$ to be the set of longest edges in $S$
and $Y_S$ to be the set of shortest non-edges. We say a sequence $S$ is an 
{\it augmenting sequence} if it is a maximal alternating sequence. 

\begin{lemma}\label{Satleast3} Let $G$ be an optimal digraph on $n$ vertices with $\beta_G > \alpha_G$.
If $\alpha_G + \beta_G \leq 3n/4$, then every shortest non-edge has a longest edge 
incident with each of its endpoints. If $\alpha_G + \beta_G \geq 3n/4$, every longest edge
has a shortest non-edge incident with each of its endpoints. Consequently, every 
augmenting sequence in $G$ has at least $3$ extreme pairs.
\end{lemma}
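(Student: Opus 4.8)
The plan is to read off the two flanking statements directly from the contrapositive of Lemma~\ref{gammalemma}, and then to establish the ``consequently'' clause by ruling out maximal alternating sequences with only one or two extreme pairs.

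I would first dispose of the flanking statements. Since $\gamma_G = 4(\alpha_G+\beta_G)-3n$, the hypothesis $\alpha_G+\beta_G \le 3n/4$ is exactly $\gamma_G \le 0$, hence $\gamma_G \ne 1$; by the contrapositive of the last clause of Lemma~\ref{gammalemma} no vertex is then incident with a shortest non-edge and with no longest edge, so both endpoints of every shortest non-edge carry a longest edge. The hypothesis $\alpha_G+\beta_G \ge 3n/4$ is $\gamma_G \ne -1$ and gives the symmetric statement for longest edges.

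For the main clause, the organizing observation is that each extreme pair $f_i=v_iv_{i+1}$ moves $v_{i+1}$ clockwise from $v_i$ by $\beta_G$ (longest edge) or $\alpha_G$ (shortest non-edge), so along any alternating sequence with at most two pairs the total advance is at most $\alpha_G+\beta_G<n$ and the vertices are distinct and clockwise-ordered. I would then show a maximal sequence cannot terminate, at either end, in a ``continuable'' pair. Suppose $\gamma_G\le0$ and the last pair $f_k$ is a shortest non-edge; its head $v_{k+1}$ has $\delta^-(v_{k+1})=\alpha_G-1$, so the flanking statement forces $\delta^+(v_{k+1})=\beta_G$ and produces a longest edge out of $v_{k+1}$; as the vertices do not repeat, this edge is distinct from all $f_i$ and can be appended, contradicting maximality. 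The front is handled identically, and the case $\gamma_G\ge0$ is symmetric with the roles of longest edges and shortest non-edges swapped. This excludes $k=2$ outright (for $k=2$ one of the two end pairs is necessarily continuable) and excludes $k=1$ except when the single pair is non-continuable.

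The one genuinely delicate case, which I expect to be the main obstacle, is a maximal sequence that is a single non-continuable pair: a lone longest edge when $\gamma_G=-1$, or a lone shortest non-edge when $\gamma_G=1$ (when $\gamma_G=0$ both types are continuable, so nothing remains). Here I would reuse the edge-deletion and insertion computations of Lemma~\ref{alphabetalemma}. For a lone longest edge $uv$, maximality forces $\delta^-(u),\delta^+(v)\ge\alpha_G$, while (\ref{lossgain1}) with $\epsilon_G=0$ gives $3c<\delta^+(v)+\delta^-(u)-\beta_G+1$, where $c=|N^+(v)\cap N^-(u)|$. Reading $c$ off the circle as $c=\max(0,\ \delta^+(v)+\delta^-(u)-(n-\beta_G-1))$, one first excludes $c=0$ using $\alpha_G>n/4$ (Lemma~\ref{n/4lemma}), and then, substituting $c=\delta^+(v)+\delta^-(u)-n+\beta_G+1$ together with $\alpha_G+\beta_G=(3n-1)/4$, derives the impossibility $\alpha_G+\beta_G<3n/4-1/2$. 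The lone shortest non-edge is ruled out identically from (\ref{lossgain2}) and $\alpha_G+\beta_G=(3n+1)/4$. The delicate point is precisely this arithmetic: it needs the exact value of $c$ and the precise quarter-integer value of $\alpha_G+\beta_G$ forced by $\gamma_G=\pm1$, since the cruder bounds of Lemma~\ref{alphabetalemma} are not sharp enough by themselves.
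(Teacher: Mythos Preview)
Your proof is correct, and for the flanking statements it is essentially the paper's argument: the paper cites Lemma~\ref{alphabetalemma} directly (with $\epsilon_G=0$) where you cite its corollary Lemma~\ref{gammalemma}, but the logic is identical.

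For the ``consequently'' clause your treatment is substantially more careful than the paper's. The paper's entire argument here is the sentence ``Clearly, these cannot hold simultaneously,'' meaning that one cannot have both a longest edge with an endpoint lacking the appropriate shortest non-edge and a shortest non-edge with an endpoint lacking the appropriate longest edge. That observation immediately disposes of $k=2$ (a maximal $2$-sequence forces both situations at its two ends), and with it the case distinction ``both ends in $X$ or both in $Y$'' used later. It does \emph{not}, on its face, rule out a single maximal extreme pair: a lone longest edge when $\gamma_G=-1$, or a lone shortest non-edge when $\gamma_G=1$, triggers only one of the two incompatible situations. You correctly isolate this as the delicate case and dispatch it by rerunning the edge-deletion/insertion inequality from the proof of Lemma~\ref{alphabetalemma} with the sharper degree bounds ($\delta^-(u),\delta^+(v)\ge\alpha_G$, respectively $\le\beta_G-1$) forced by maximality; the resulting inequality $\alpha_G+\beta_G<3n/4-1/2$ (respectively $>3n/4+1/2$) contradicts Lemma~\ref{gammalemma}. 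This argument is sound and is exactly what is needed; the paper's proof leaves this step implicit.
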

\begin{proof} Let $G$ be an optimal digraph on $n$ vertices with $\beta_G > \alpha_G$.
If some longest edge $uv$ is not incident with a shortest non-edge at both $u$ and $v$, since 
$\beta_G > \alpha_G$, Lemma \ref{alphabetalemma} gives $\alpha_G + \beta_G > 3n/4$. Similarly, if some shortest 
non-edge $uv$ is not incident with a longest edge at both $u$ and $v$, Lemma \ref{alphabetalemma} gives 
$\alpha_G + \beta_G < 3n/4$.  Clearly, these cannot hold simultaneously. This proves Lemma \ref{Satleast3}.
\end{proof}

\begin{lemma} Let $G$ be an optimal digraph with $\beta_G > \alpha_G$. For every augmenting sequence 
$S = v_1, f_1, v_2, f_2, v_3, f_3, \dots, f_k, v_{k+1}$ in $G$, 
$v_i \neq v_j$ for $i \neq j$, except possibly $v_{k+1} = v_1$.
\end{lemma}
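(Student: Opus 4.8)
The plan is to reduce the statement to a single structural fact about optimal digraphs with $\beta_G > \alpha_G$, namely that \emph{each vertex is incident with at most one outgoing extreme pair and at most one incoming extreme pair}. Here I call an extreme pair $f = v_iv_{i+1}$ \emph{outgoing} at $v_i$ and \emph{incoming} at $v_{i+1}$. Granting this fact, the lemma drops out of a short incidence count along the sequence $S$, so the real work is in establishing the structural fact and then bookkeeping the boundary indices $1$ and $k+1$.

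To prove the structural fact, I would fix a vertex $w$ and use the circular interval structure directly. Its out-neighbors form a clockwise interval, so the unique furthest out-neighbor gives the only candidate outgoing longest edge, of length $\delta^+(w)$, and the first clockwise non-out-neighbor gives the only candidate outgoing shortest non-edge, of length $\delta^+(w)+1$. An outgoing longest edge therefore forces $\delta^+(w) = \beta_G$, whereas an outgoing shortest non-edge forces $\delta^+(w) = \alpha_G - 1$; since $\beta_G > \alpha_G > \alpha_G - 1$ these cannot both occur, and each candidate is unique, so $w$ is incident with at most one outgoing extreme pair. The identical argument applied to $N^-(w)$ (furthest in-neighbor and first counterclockwise non-in-neighbor, of lengths $\delta^-(w)$ and $\delta^-(w)+1$) shows $w$ is incident with at most one incoming extreme pair. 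This is the only place the hypothesis $\beta_G > \alpha_G$ enters, and it is exactly what prevents a longest edge and a shortest non-edge from sharing a side at $w$.

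Now suppose $v_i = v_j$ with $i < j$. Since each $f_m = v_mv_{m+1}$ is an extreme pair it has distinct endpoints, so $v_m \neq v_{m+1}$ for every $m$; hence $j \geq i+2$, and in particular $j \geq 3$. Let $w = v_i = v_j$ and count the extreme pairs of $S$ incident with $w$. The pairs outgoing at $w$ are $f_i$ (which exists, as $i \leq j-2 \leq k-1$) and, if $j \leq k$, also $f_j$; these are distinct by condition~1. Since $w$ is incident with at most one outgoing extreme pair, we must have $j = k+1$. Symmetrically, the pairs incoming at $w$ are $f_{j-1}$ (which exists, as $j \geq 3$) and, if $i \geq 2$, also $f_{i-1}$; again distinct by condition~1. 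Since $w$ is incident with at most one incoming extreme pair, we must have $i = 1$. Thus the only possible coincidence is $v_1 = v_{k+1}$, the permitted exception.

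The main obstacle is simply isolating the right structural fact: once one notices that $\beta_G > \alpha_G$ makes the outgoing (respectively incoming) longest edge and shortest non-edge mutually exclusive, both of which are already unique by the interval structure, the remaining argument is routine counting. The only care needed is at the two ends of $S$, ensuring that $v_1$ contributes no incoming pair and $v_{k+1}$ contributes no outgoing pair, which is precisely what singles out $(i,j) = (1,k+1)$.
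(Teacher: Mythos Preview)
Your proof is correct and follows essentially the same approach as the paper's. Both arguments hinge on the observation that, because the in- and out-neighborhoods at any vertex are intervals and $\beta_G > \alpha_G$, a vertex can be the head of at most one extreme pair and the tail of at most one extreme pair; the paper compresses this into the single line ``this contradicts that $G$ is a circular interval digraph with $\beta_G > \alpha_G$'', whereas you spell it out explicitly via $\delta^\pm(w)\in\{\alpha_G-1,\beta_G\}$.
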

\begin{proof}
Suppose not, and let $v_h = v_j$ with $1\leq h,j \leq k+1$, and $h$ different from $j$. 
Suppose $h,j > 1$. Then one of $f_{h-1},f_{j-1}$ is a longest edge and one is a shortest non-edge 
(since there cannot be two longest edges ending at a given vertex).
However, this contradicts that $G$ is a circular interval digraph with $\beta_G > \alpha_G$. 
Analogously, if $h,j < k+1$, one of $f_{h+1}, f_{j+1}$ is a longest edge, and the other is 
a shortest non-edge, and the same contradiction is reached. This proves that if $v_h = v_j$, then 
$\{h,j\} = \{1, k+1\}$. 
\end{proof}

\begin{thm}\label{optimalbetaalphathm}
If $G$ is an optimal digraph, then $\beta_G \leq \alpha_G$. Furthermore, 
either $\alpha_G = \beta_G$ or $\alpha_G = \beta_G + 1$.
\end{thm}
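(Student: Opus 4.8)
\emph{Proof strategy.} The plan is to prove $\beta_G \le \alpha_G$ by contradiction and then read off the ``furthermore'' clause. Suppose $G$ is optimal on $n$ vertices with $\beta_G > \alpha_G$; note that $\beta_G > \alpha_G$ already forces $\alpha_G$ to be finite, so $G$ has a non-edge and every preceding lemma stated for the case $\beta_G > \alpha_G$ applies. First I would fix an augmenting sequence $S = v_1, f_1, v_2, \dots, f_k, v_{k+1}$ and form the simultaneous swap $G' = (G \setminus X_S) + Y_S$, deleting all longest edges of $S$ and inserting all shortest non-edges of $S$ as edges. By Lemma \ref{stayCIGobs} (using $\alpha_G \le \beta_G$) the digraph $G'$ is again a $2$-free circular interval digraph; by Lemma \ref{Satleast3} we have $k \ge 3$; and by the distinctness lemma the vertices $v_1, \dots, v_{k+1}$ are distinct except possibly $v_{k+1} = v_1$, so the modified pairs overlap only at consecutive shared endpoints.

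The goal is then to derive a contradiction from two facts: that the swap does not decrease the number of induced $3$-vertex paths, $\tilde{P}_3(G') \ge \tilde{P}_3(G)$, and that it strictly decreases the secondary statistic, $\xi(G') < \xi(G)$. The $\xi$ estimate is the routine half and follows the bookkeeping already used in Lemmas \ref{n/2lemma} and \ref{n/4lemma}: every deleted pair is an edge of length $\beta_G$ while every inserted pair has length $\alpha_G < \beta_G$, and since $\beta_G < n/2$ the pairs deleted from $X_S$ reappear only as non-edges of length $\beta_G > \alpha_G$; hence replacing long edges by short ones strictly reduces the count of (edge, non-edge) pairs in which the edge is longer than the non-edge. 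Given $\tilde{P}_3(G') \ge \tilde{P}_3(G)$, optimality of $G$ forces $\tilde{P}_3(G') = \tilde{P}_3(G)$, so $G'$ is also a maximizer, and then $\xi(G') < \xi(G)$ contradicts the minimality of $\xi(G)$. This would prove $\beta_G \le \alpha_G$.

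The main obstacle is the inequality $\tilde{P}_3(G') \ge \tilde{P}_3(G)$. I would compute the net change by summing the single-pair contributions already isolated in the proof of Lemma \ref{alphabetalemma}: deleting a longest edge $v_i v_{i+1}$ changes $\tilde{P}_3$ by $\delta^+(v_{i+1}) + \delta^-(v_i) - c_i + \beta_G - 1$, while inserting a shortest non-edge $v_j v_{j+1}$ changes it by $\delta^+(v_{j+1}) + \delta^-(v_j) - 3c_j - \alpha_G + 1$, where $c_i = |N^+(v_{i+1}) \cap N^-(v_i)|$. The delicacy is that these contributions must be evaluated with consistent intermediate degrees: at each shared internal vertex a deleted in-edge and an inserted out-edge meet, so the $\pm 1$ degree adjustments of consecutive pairs should telescope, while the overlap terms $c_i$ are constrained by Lemma \ref{extremelemma}, which forbids a clockwise triple from being triply extreme and so pins down how two extreme pairs may share a vertex. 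Which constant finally survives the sum is then governed by the type of $S$ together with $\gamma_G \in \{-1, 0, 1\}$ from Lemma \ref{gammalemma}: when $\gamma_G = 0$ the sequence closes into an even alternating cycle and the swap is length-balanced, whereas the two endpoint cases correspond exactly to $\gamma_G = \pm 1$ as described in Lemma \ref{Satleast3}. In each case I expect the surviving total to be non-negative, which is precisely the step I anticipate requiring the most careful case analysis.

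Finally, the ``furthermore'' clause is immediate. As noted in the proof of Lemma \ref{outdegbounds}, $\alpha_G$ is finite for an optimal digraph, so $G$ has a shortest non-edge $v_i v_j$; then $v_i v_{j-1}$ is an edge of length at most $\beta_G$, giving $\alpha_G \le \beta_G + 1$. Combined with $\beta_G \le \alpha_G$ this yields $\beta_G \le \alpha_G \le \beta_G + 1$, so either $\alpha_G = \beta_G$ or $\alpha_G = \beta_G + 1$.
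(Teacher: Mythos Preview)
Your strategy is essentially identical to the paper's proof: assume $\beta_G > \alpha_G$, take an augmenting sequence $S$, form $G' = (G \setminus X_S) + Y_S$, show $\tilde{P}_3(G') \ge \tilde{P}_3(G)$ by summing single-pair contributions with correction terms controlled by Lemmas~\ref{extremelemma} and~\ref{gammalemma} (the paper packages these as $R$, $T_1$, $T_2$ and runs the same case split on whether $S$ closes up and on the parity of $k$), and then contradict optimality via $\xi(G') < \xi(G)$. One caution: your displayed formula for the change upon deleting a longest edge has the degree terms with the wrong sign---it should read $-\delta^-(v_i) - \delta^+(v_{i+1}) + 3c_i + \beta_G - 1$, which is exactly what the paper's Step~2 computes---but once that is fixed the plan goes through as you describe.
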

\begin{proof}
The second statement in Theorem \ref{optimalbetaalphathm}
follows from the first since $\alpha_G \leq \beta_G + 1$ by Lemma \ref{outdegbounds}, so it remains prove the first.
Suppose $G$ is an optimal digraph on $n$ vertices with $\beta = \beta_G > \alpha_G = \alpha$. 
Also let $\gamma = \gamma_G$. Let $S$ be an augmenting sequence in $G$, and
set $X = X_S$ and $Y = Y_S$. Let $S = v_1, f_1, v_2, f_2, \ldots, f_k, v_{k+1}$. 
Define $G' = (G+X)\setminus Y$, and note this is also a $2$-free circular interval 
digraph by Lemma \ref{stayCIGobs}.

Fix an extreme pair $uv \in X \cup Y$. Define $G_{uv} = G + uv$ if $uv \in Y$ and $G_{uv} = G\setminus uv$ if $uv \in X$.
For vertices $w$ different from $u,v$, define $p(w) =1$ if 
$G|\{u,v,w\}$ is a directed path (and $p(w) = 0$ otherwise), and $q(w) = 1$ if $G_{uv}|\{u,v,w\}$ is a directed path 
(and $q(w) = 0$ otherwise). Finally, let 
\begin{displaymath}
R(uv) = \sum_{w \neq u,v} q(w) - p(w).
\end{displaymath}
We now define
\begin{displaymath}
R = \sum_{uv \in X\cup Y} R(uv).
\end{displaymath}
By Lemma \ref{extremelemma}, no triple of vertices in $G$ contains three extreme pairs. 
Let $T_1$ be the number of triples of vertices $\{u,v,w\}$ such that $u,v,w$ are in clockwise order in $G$ and 
two of $uv,vw,wu$ are in $X$. Let $T_2$ be the number of $\{u,v,w\}$ such that $u,v,w$ are in clockwise
order in $G$, one of $uv,vw,wu$ is in $X$, and one is in $Y$.

Finally, for a vertex $v$, define $s^+(v) = |N^+(v)| - (\alpha - 1)$ and
$s^-(v) = |N^-(v)| - (\alpha - 1)$. Also define $t^+(v) = \beta - |N^+(v)|$ and $t^-(v) = \beta - |N^-(v)|$.
Then $s^+(v), s^-(v), t^+(v)$, and $t^-(v)$ are non-negative by Lemma \ref{outdegbounds}.

\setcounter{step}{0}
\begin{step}\label{p3d:step1}
$\tilde{P}_3(G') - \tilde{P}_3(G) = R - 2 T_1 + 2 T_2$. \end{step}

This follows from the definitions, using Lemma \ref{extremelemma} to characterize those pairs with two extreme pairs 
in $X \cup Y$.
~\bbox

\begin{step}\label{p3d:step2} For $uv\in X$, $R(uv) = \gamma + 2s^+(v) + 2s^-(u) - 2$.
\end{step}
There are $\beta - 1 + |N^+(v) \cap N^-(u)|$ vertices $w$ where $q(w) = 1$ and $p(w) = 0$. 
Since $\beta > \alpha$, Lemma \ref{alphabetalemma} implies $\alpha + \beta > 3n/4 - 1/2$, and 
combined with Lemma \ref{n/4lemma}, this gives $2\alpha + \beta \geq n$. 
By Lemma \ref{extremelemma}, this implies no vertex is non-adjacent to both $u$ and $v$. 
We can now count that there are  $n- (\beta -1) -2 - |N^+(v) \cap N^-(u)|$ vertices $w$ 
where $q(w) = 0$ but $p(w) = 1$. Recalling that $R(uv) = \sum_{w \neq u,v} q(w) - p(w)$, we have
\begin{displaymath}
R(uv) = \beta-1+|N^+(v) \cap N^-(u)| -(n-\beta-1-|N^+(v) \cap N^-(u)|), 
\end{displaymath}
or
\begin{displaymath}
R(uv) =  2\beta + 2|N^+(v) \cap N^-(u)| -n.
\end{displaymath}
We know $|N^+(v) \cap N^-(u)| = |N^+(v)| + |N^-(u)| + (\beta -1) + 2 - n$, which we can rewrite as  
$2\alpha + \beta - n - 1 + (|N^+(v)| - (\alpha-1)) + (|N^-(u)| - (\alpha -1))$. Then using the definitions of 
$s^+(v)$ and $s^-(u)$, we have
\begin{displaymath}
R(uv) =  2\beta + 2(2\alpha + \beta - n - 1) + 2s^+(v) + 2s^-(u)-n,
\end{displaymath}
which simplifies to 
\begin{displaymath}
R(uv) = 4(\alpha + \beta) -3n - 2 + 2s^+(v) + 2s^-(u).
\end{displaymath}
From the definition of $\gamma = 4(\alpha + \beta) - 3n$, this proves Step \ref{p3d:step2}.~\bbox

\begin{step}\label{p3d:step3} For $uv\in Y$, $R(uv) = 2t^+(v) + 2t^-(u) -\gamma - 2$.
\end{step}
There are $n- (\alpha +1) - |N^+(v) \cap N^-(u)|$ vertices $w$ where $q(w) = 1$ but $p(w) = 0$, 
and there are $\alpha - 1 + |N^+(v) \cap N^-(u)|$ vertices $w$ where $q(w) = 0$ and $p(w) = 1$.
Recalling that $R(uv) = \sum_{w \neq u,v} q(w) - p(w)$, we have 
\begin{displaymath}
R(uv) = n-\alpha -1 - |N^+(v) \cap N^-(u)| - (\alpha - 1 + |N^+(v) \cap N^-(u)|),
\end{displaymath}
or 
\begin{displaymath}
R(uv) = n - 2\alpha - 2|N^+(v) \cap N^-(u)|.
\end{displaymath}
We know $|N^+(v) \cap N^-(u)| = |N^+(v)| + |N^-(u)| + (\alpha -1) + 2 - n$, which we can rewrite as  
$\alpha + 2\beta - n + 1 - (\beta - |N^+(v)|) -  (\beta - |N^-(u)|)$. Then 
using the definitions of $t^+(v), t^-(u)$, we have
\begin{displaymath}
R(uv) = n - 2\alpha - 2(\alpha + 2\beta -n + 1) + 2t^+(v) + 2t^-(u), 
\end{displaymath}
which simplifies to 
\begin{displaymath}
R(uv) = 3n - 4(\alpha + \beta) + 2t^+(v) + 2t^-(u) - 2.
\end{displaymath}
From the definition of $\gamma = 4(\alpha + \beta) - 3n$, this proves Step \ref{p3d:step3}.~\bbox

\begin{step}\label{p3d:step4} $\tilde{P}_3(G') - \tilde{P}_3(G) \ge 0$. \end{step}

For $1 \leq i \leq k$, if $v_iv_{i+1} \in X$, then $v_i$ has out-degree $\beta$ and 
$t^+(v_i) = 0$; similarly, if $v_iv_{i+1} \in Y$, then $s^+(v_i) = 0$. For $2 \leq i \leq k+1$, 
if $v_{i-1}v_i \in X$, then $v_i$ has in-degree $\beta$ and $t^-(v_i) = 0$; analogously, 
if $v_{i-1}v_i \in Y$, then $s^-(v_i) = 0$. For $2 \leq i \leq k$, we note that the definition 
of an augmenting sequence implies that one of $v_{i-1}v_i, v_iv_{i+1}$ is in $X$ and the other in $Y$.
Then by Steps \ref{p3d:step2} and \ref{p3d:step3}, for $3 \leq i \leq k$
\begin{equation}\label{Rii}
R(v_{i-1}v_{i}) = 
\begin{cases}
\gamma - 2 & \text{if }v_{i-1}v_i \in X \\
-\gamma - 2 & \text{if }v_{i-1}v_i \in Y.
\end{cases}
\end{equation}

We see $T_1 = 0$ unless $v_1 = v_{k+1}$, $k$ is odd, and $v_1v_2, v_kv_{k+1}$ are 
both in $X$, and in that case $T_1 = 1$. Also $T_2 = k-1$ unless $v_1 = v_{k+1}$ and $k$ is even, 
and in that case $T_2 = k$.

First, suppose $v_1 = v_{k+1}$ and $k$ is odd. Then $v_1v_2$ and $v_kv_1$ must be in $X$ by Lemma \ref{extremelemma}. 
By Step \ref{p3d:step1}, $\tilde{P}_3(G') - \tilde{P}_3(G)  =  R - 2 T_1 + 2 T_2$, which in conjunction with
equation (\ref{Rii}) and the earlier argument giving $s^+(v_2) = s^-(v_{k}) = 0$ implies
\begin{equation*}
\tilde{P}_3(G') - \tilde{P}_3(G) =  (\gamma + 2s^-(v_1) - 2) + (\gamma + 2s^+(v_{k+1}) -2) - 2(k-2) - \gamma -2T_1 + 2T_2 .
\end{equation*}
Using that $v_1 = v_{k+1}$ and substituting $T_1 = 1$ and $T_2 = k-1$, we have
\begin{equation*}
\tilde{P}_3(G') - \tilde{P}_3(G) =  \gamma + 2(s^-(v_1) + s^+(v_1) - k - 1 + (k-1)) = \gamma + 2s^-(v_1) + 2s^+(v_1) - 4.
\end{equation*}
We know $|N^-(v_1)| = |N^+(v_1)| = \beta$, so $s^-(v_1) = s^+(v_1) = \beta - \alpha + 1$. Since $\beta > \alpha$, 
$s^-(v_1), s^+(v_1) \geq 2$. Substituting in the above inequality, we have
\begin{equation*}
\tilde{P}_3(G') - \tilde{P}_3(G) \geq  \gamma + 4. 
\end{equation*}
Since $\gamma \geq -1$ by Lemma \ref{gammalemma}, $\tilde{P}_3(G') > \tilde{P}_3(G)$, as required. 

Now suppose that $v_1 = v_{k+1}$ and $k$ is even.
Since $v_1 = v_{k+1}$, equation (\ref{Rii}) holds for $2 \leq i \leq k+1$. Also,
by Step \ref{p3d:step1},
\begin{equation}\label{keveneq}
\tilde{P}_3(G') - \tilde{P}_3(G) =  R - 2 T_1 + 2 T_2 =  - 2k -2T_1 + 2T_2.
\end{equation}
Substitution of $T_1 = 0$, $T_2 = k$ in equation (\ref{keveneq})
yields $\tilde{P}_3(G') - \tilde{P}_3(G) = 0$, as required. 

Thus we may assume $v_1 \neq v_{k+1}$. We note Lemma \ref{Satleast3} implies that if $v_1 \neq v_{k+1}$, then
$v_1v_2, v_kv_{k+1}$ are either both in $X$ or both in $Y$, and $k$ is odd.
Let $\mu = -1$ if $v_1v_2 \in X$ and $\mu = 1$ otherwise.

By Step \ref{p3d:step1}, $\tilde{P}_3(G') - \tilde{P}_3(G)  =  R - 2 T_1 + 2 T_2$, which in conjunction with
equation (\ref{Rii}) implies
\begin{equation}\label{koddeq}
\tilde{P}_3(G') - \tilde{P}_3(G) =  R(v_1v_2) + R(v_kv_{k+1}) - 2(k-2) + \mu \gamma -2T_1 + 2T_2,
\end{equation}
where
\begin{displaymath}
R(v_1v_2) = 
\begin{cases}
\gamma + 2s^-(v_1) - 2 & \text{if }v_1v_2 \in X \\
2t^-(v_1) -\gamma - 2 & \text{if }v_1v_2 \in Y,
\end{cases}
\end{displaymath}
and
\begin{displaymath}
R(v_kv_{k+1}) = 
\begin{cases}
\gamma + 2s^+(v_{k+1}) - 2 & \text{if }v_kv_{k+1} \in X \\
2t^+(v_{k+1}) -\gamma - 2 & \text{if }v_{k}v_{k+1} \in Y.
\end{cases}
\end{displaymath}
Then equation (\ref{koddeq}) can be simplified to
 \begin{equation*}
\tilde{P}_3(G') - \tilde{P}_3(G) =  
\begin{cases}
\gamma + 2s^-(v_1) + 2s^+(v_{k+1}) - 2k -2T_1 + 2T_2 
& \text{if }v_1v_2 \in X\\
- \gamma + 2t^-(v_1) + 2t^+(v_{k+1}) - 2k -2T_1 + 2T_2 
& \text{if }v_1v_2 \in Y.
\end{cases}
\end{equation*}
Recalling that $T_1 = 0$ and $T_2 = k-1$, we have 
\begin{equation*}
\tilde{P}_3(G') - \tilde{P}_3(G) =  
\begin{cases}
\gamma + 2s^-(v_1) + 2s^+(v_{k+1}) - 2
& \text{if }v_1v_2 \in X\\
- \gamma + 2t^-(v_1) + 2t^+(v_{k+1}) - 2
& \text{if }v_1v_2 \in Y.
\end{cases}
\end{equation*}

First, suppose $v_1v_2 \in X$. Then $s^-(v_1), s^+(v_{k+1}) \geq 1$, 
since otherwise $S$ is not maximal. Since $\gamma \geq -1$ by Lemma \ref{gammalemma},
this proves $\tilde{P}_3(G') - \tilde{P}_3(G) > 0$. On the other hand, suppose
$v_1v_2 \in Y$. Then $t^-(v_1), t^+(v_{k+1}) \geq 1$ by the maximality of $S$. 
Now $\gamma \leq 1$ by Lemma \ref{gammalemma}, and again $\tilde{P}_3(G') - \tilde{P}_3(G) > 0$.
This completes the proof of Step \ref{p3d:step4}.~\bbox 

We observe that $\xi(G') < \xi(G)$ follows immediately from the definition of $\xi(G)$ and 
the fact $\beta < \alpha$. Yet we have now contradicted the optimality of $G$. This proves Theorem \ref{optimalbetaalphathm}.
\end{proof}

Finally, we prove a lemma relating the number of induced $3$-vertex paths in a general circular interval 
digraph with longest edge of length $\beta$ to the number in $G_\beta$. We need two further definitions. 

Let $H_{\beta}$ be the subgraph of $G_{\beta}$ with the same vertex set, and 
$E(H_\beta) = \{uv\,:\, d(u,v) = \beta\}$. Also, 
for $X \subseteq E(H_\beta)$, let $t(X)$ be the number of vertices of $H_\beta$ which 
are incident with exactly one edge in $X$. 

\begin{lemma}\label{gbetalemma1}
Let $n\ge 4$, and let $\beta$ be an integer satisfying $-2 \le 8\beta -3n \le 2$. 
Then for all $X \subseteq E(H_\beta)$, 
\begin{displaymath}
|X|(8\beta - 3n) + t(X) + n(n-2\beta -1)(2\beta -n/2+1)\le n^3/16.
\end{displaymath}

\end{lemma}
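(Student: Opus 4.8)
The plan is to first reduce the inequality to a much cleaner form by eliminating $\beta$ in favor of the integer $m = 8\beta - 3n$, which by hypothesis lies in $\{-2,-1,0,1,2\}$. Substituting $\beta = (3n+m)/8$ into the quadratic term, I would compute
\begin{equation*}
n(n-2\beta-1)(2\beta - n/2 + 1) = n\cdot\frac{(n-m-4)(n+m+4)}{16} = \frac{n^3 - n(m+4)^2}{16}.
\end{equation*}
Hence $n^3/16$ minus this term is exactly $n(m+4)^2/16$, and the desired bound is \emph{equivalent} to
\begin{equation*}
|X|\,m + t(X) \le \frac{n(m+4)^2}{16}.
\end{equation*}
So the whole lemma reduces to bounding $|X|\,m + t(X)$.

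Next I would exploit the structure of $H_\beta$. Each vertex $v_i$ has exactly one out-edge (to $v_{i+\beta}$) and one in-edge (from $v_{i-\beta}$); since $-2\le m\le 2$ and $n\ge 4$ force $2 \le \beta < n/2$, we have $0 < 2\beta < n$, so $v_{i+\beta}$, $v_i$, $v_{i-\beta}$ are distinct and these two edges are distinct. Thus $H_\beta$ is $2$-regular (a disjoint union of directed cycles), and every vertex is incident with $0$, $1$, or $2$ edges of any $X \subseteq E(H_\beta)$. Letting $n_1, n_2$ denote the number of vertices incident with exactly one, respectively exactly two, edges of $X$, I get the three relations $t(X) = n_1$, $\;2|X| = n_1 + 2n_2$ (counting edge-endpoints), and $n_1 + n_2 \le n$ (there are only $n$ vertices).

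Substituting $|X| = (n_1 + 2n_2)/2$ then gives the linear expression
\begin{equation*}
|X|\,m + t(X) = \frac{m+2}{2}\,n_1 + m\,n_2 ,
\end{equation*}
which I would maximize as a linear function of $(n_1,n_2)$ over the triangle $n_1,n_2 \ge 0$, $n_1 + n_2 \le n$. The maximum is attained at a vertex of this region, namely $n\cdot\max\{(m+2)/2,\, m,\, 0\}$. It then remains only to verify the finite inequality $\max\{(m+2)/2,\, m,\, 0\} \le (m+4)^2/16$ for each $m \in \{-2,-1,0,1,2\}$: the left sides are $0,\tfrac12,1,\tfrac32,2$ and the right sides are $\tfrac14,\tfrac{9}{16},1,\tfrac{25}{16},\tfrac{36}{16}$, so the inequality holds in every case, with equality exactly at $m=0$.

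I do not expect a genuine obstacle here; the proof is essentially mechanical once the two key observations are in place. The real content is (i) the algebraic reduction that repackages the parabola coefficient as $(m+4)^2$, so that the target becomes the clean $n(m+4)^2/16$, and (ii) recognizing that $2$-regularity of $H_\beta$ collapses the combinatorial optimization over all $X$ into a two-variable linear program governed only by $n_1$ and $n_2$. The tightness at $m=0$ is exactly what pins the constant at $n^3/16$ and matches the extremal digraph $G_{(3n-4)/8}$; the only mild care needed is checking $2\le\beta<n/2$ so that $H_\beta$ really is $2$-regular (and noting the statement is vacuous for those small $n$, such as $n=4$, admitting no valid integer $\beta$).
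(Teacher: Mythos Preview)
Your proof is correct and follows essentially the same approach as the paper: both make the substitution $\delta = 8\beta - 3n$ to reduce the claim to $|X|\delta + t(X) \le n(\delta+4)^2/16$, and both exploit the $2$-regularity of $H_\beta$ (the paper via $2|X|\ge t$, $2|Y|\ge t$, $|X|+|Y|=n$, which is equivalent to your $n_1,n_2$ parametrization). The only difference is cosmetic: the paper finishes with five separate cases for $\delta$, while you unify them into a single two-variable linear program whose maximum you then tabulate for each $\delta$.
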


\begin{proof}
Let $\delta = 8\beta - 3n$. Then $-2 \le \delta \le 2$, and (eliminating 
$\beta$) we must show that
\begin{displaymath}
|X|\delta + t(X) + n(n-\delta -4)(n + \delta +4)/16 \le n^3/16,
\end{displaymath}
that is, 
\begin{equation}\label{deltaineq}
|X|\delta + t(X) \le n(\delta +4)^2/16
\end{equation}
for all $X\subseteq E(H_\beta)$.

Let $t = t(X)$, and $Y = E(H_\beta)\setminus X$. In $G_\beta$, every vertex is incident with 
two edges of length $\beta$. Since $X \cup Y = E(H_\beta)$, and $t$ counts vertices 
which are incident with exactly one edge in $X$, we have that $2|Y|\geq t$, 
$2|X|\geq t$, and $|X| + |Y| = n$.

\setcounter{case}{0}
\begin{case} $\delta = 0$.\end{case}

Since $\delta = 0$, equation (\ref{deltaineq}) becomes $t \leq n$, 
which is clear since $G$ has $n$ vertices.

\begin{case} $\delta = 1$.\end{case}

Substituting into inequality (\ref{deltaineq}), we must show that $|X| + t \leq 25 n/16.$
Since $2|Y|\geq t$ and $2|X|\geq t$, it follows that $6|Y| + 2|X|\geq 4t$.
Using $|X|+|Y| = n$ to eliminate $|Y|$ gives
$6(n-|X|) + 2|X| \geq 4t$, that is, $|X| + t \leq 3n/2 < 25n/16$, as required.

\begin{case} $\delta = 2$.\end{case}

In this case, equation (\ref{deltaineq}) becomes $2|X| + t \leq 9 n/4.$
But since $2|Y|\geq t$ and $|Y| = n-|X|$, we have $2(n -|X|) \geq t$, or  
$2|X| + t \leq 2t \leq 2n < 9n/4$, as required.

\begin{case} $\delta = -1$.\end{case}

When $\delta = -1$, we need to show $t -|X| \leq 9n/16$ to prove the inequality in (\ref{deltaineq}).
If $|X|\leq n/2$ then $t \leq 2|X| \leq |X| + n/2$. 
If $|X|> n/2$, then $t \leq 2|Y| = 2(n-|X|) \leq n/2 + |X|$. In both cases, 
$t \leq |X| + n/2 < |X| + 9n/16$, as required.

\begin{case} $\delta = -2$.\end{case}

Finally, when $\delta = -2$, proving (\ref{deltaineq}) requires $t - 2|X| \leq n/4$.
But $2|X| \geq t$, so this is trivial. This proves Lemma \ref{gbetalemma1}.
\end{proof}

\begin{lemma}\label{gbetalemma2} Let $G =  G_\beta \setminus X$, where $X \subseteq E(H_\beta)$, and $8\beta-3n \geq 2$.
Then $\tilde{P}_3(G) = \tilde{P}_3(G_\beta) + |X|(8\beta - 3n) + t(X)$. 
\end{lemma}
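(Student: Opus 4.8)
The plan is to compute $\tilde{P}_3(G) - \tilde{P}_3(G_\beta)$ by comparing the two digraphs triple-by-triple. For a set $T$ of three distinct vertices write $\sigma_H(T) = 1$ if $H|T$ is an induced directed path and $\sigma_H(T)=0$ otherwise, so that $\tilde{P}_3(H) = \sum_T \sigma_H(T)$. Since $G$ and $G_\beta$ differ only in the edges of $X$, and every edge of $X$ has length exactly $\beta$, a triple $T$ can satisfy $\sigma_G(T) \neq \sigma_{G_\beta}(T)$ only if $T$ contains an edge of $X$. Throughout I use $n/3 < \beta < n/2$; the lower bound follows from $8\beta - 3n \ge 2$, and the upper bound from the $2$-freeness of $G_\beta$ (so no pair has both orientations, and $2\beta < n < 3\beta$).

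First I would establish a single-edge formula: for any $e = uv \in E(H_\beta)$,
\[
\Delta_e := \sum_{T \ni e}\big(\sigma_{G_\beta\setminus e}(T) - \sigma_{G_\beta}(T)\big) = 8\beta - 3n + 2,
\]
the sum being over the $n-2$ triples containing both ends of $e$. This is a routine case analysis on the clockwise position $u+k$ of the third vertex $w$ (with $1 \le k \le n-1$, $k \ne \beta$): for $1 \le k \le \beta-1$ removing $e$ converts a transitive triple into the induced path $u\d w\d v$ $(+1)$; for $n-\beta \le k \le 2\beta$ it converts a directed triangle into the induced path $v\d w\d u$ $(+1)$; and for $\beta+1\le k\le n-\beta-1$ and for $2\beta+1 \le k \le n-1$ it destroys the induced paths $u\d v\d w$ and $w\d u\d v$ respectively $(-1)$. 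Summing the four contributions $(\beta-1)$, $(3\beta-n+1)$, $-(n-2\beta-1)$, $-(n-2\beta-1)$ gives $8\beta-3n+2$.

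Next I would assemble the global change from the $\Delta_e$. If a triple $T$ meets $X$ in exactly one edge $e$, then the edges of $T$ in $G$ coincide with those in $G_\beta\setminus e$, so $\sigma_G(T)-\sigma_{G_\beta}(T)$ equals the term $T$ contributes to $\Delta_e$, and $T$ is counted once in $\sum_{e\in X}\Delta_e$. A triple meeting $X$ in two edges must (since $\beta>n/3$ forbids three length-$\beta$ edges, and two length-$\beta$ edges in a triple are forced to be clockwise-consecutive) be a directed triangle $\{v-\beta,\,v,\,v+\beta\}$ whose two length-$\beta$ edges $v-\beta\d v$ and $v\d v+\beta$ both lie in $X$; its third edge has length $n-2\beta\ne\beta$ and is never in $X$. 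These triples are in bijection with the vertices $v$ having $\deg_X(v)=2$. For such $T$ one checks $\sigma_{G_\beta}(T)=\sigma_G(T)=0$, whereas $T$ contributes $+1$ to each of $\Delta_{e_1},\Delta_{e_2}$, so $T$ is overcounted by exactly $2$ in $\sum_{e\in X}\Delta_e$. Hence
\[
\tilde{P}_3(G) - \tilde{P}_3(G_\beta) = \sum_{e\in X}\Delta_e - 2\,\big|\{v:\deg_X(v)=2\}\big| = |X|(8\beta-3n+2) - 2\,\big|\{v:\deg_X(v)=2\}\big|.
\]

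Finally, every vertex lies on exactly two edges of $H_\beta$, so $\deg_X(v)\in\{0,1,2\}$; the handshake identity $\sum_v \deg_X(v) = 2|X|$ together with $t(X) = |\{v:\deg_X(v)=1\}|$ gives $2\,|\{v:\deg_X(v)=2\}| = 2|X| - t(X)$. Substituting yields $\tilde{P}_3(G) - \tilde{P}_3(G_\beta) = |X|(8\beta-3n) + t(X)$, as required. I expect the main obstacle to be the single-edge formula together with the interaction bookkeeping: one must verify that the per-edge count $\Delta_e$ treats each directed-triangle triple as though removing \emph{either} of its two $X$-edges produced an induced path, which inflates the naive sum $\sum_e \Delta_e$ by exactly $2$ for each degree-two vertex — and this discrepancy is precisely what the $t(X)$ term records.
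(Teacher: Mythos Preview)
Your argument is correct and follows essentially the same per-edge accounting as the paper: both proofs classify the third vertex $w$ by its clockwise offset from $u$, obtain the four contributions $(\beta-1)$, $(3\beta-n+1)$, $-(n-2\beta-1)$, $-(n-2\beta-1)$, and then reconcile the interaction when two edges of $X$ share a vertex to produce the $t(X)$ term. Your organization---first isolating the single-edge effect $\Delta_e = 8\beta-3n+2$ and then subtracting the overcount $2|\{v:\deg_X(v)=2\}| = 2|X|-t(X)$ via the handshake identity---is slightly more explicit than the paper's terse direct count in $G$ versus $G_\beta$, but the underlying computation is the same.
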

\begin{proof}
For each edge $uv$ in $X$, the number of induced $3$-vertex paths using both of $u,v$ which are in 
$G$ and not $G_\beta$ is $\beta - 1 + (3\beta - n-1)$, plus one for each vertex $w$ so that $uw$ or $wv$ 
is in $X$. The number of induced $3$-vertex paths using $u$ and $v$ which are in $G_\beta$ and not $G$ is 
$2(n-2\beta -1)$. Summing over all $uv$ in $X$, we see that 
$\tilde{P}_3(G) = \tilde{P}_3(G_\beta) + |X|(8\beta - 3n) + t(X)$, by definition of $t(X)$.
This proves Lemma \ref{gbetalemma2}.
\end{proof}

\noindent{\bf Proof of Theorem \ref{CIGthm}.}
Let $G$ be a digraph on $n$ vertices.
If $n \leq 2$, then $\tilde{P}_3(G) = 0 \leq n^3/16$, and if $n = 3$, then $\tilde{P}_3(G) \leq 1 \leq 27/16$. 
So we may assume $n \geq 4$, and that $G$ is optimal.  It follows from Theorem \ref{optimalbetaalphathm} that
every optimal digraph $G$ with maximum edge length $\beta$ 
can be written as $G_\beta \setminus X$ for some set $X \subseteq H_\beta$.
We now show that every choice of $X$ gives $\tilde{P}_3(G) \leq n^3/16$. Let $\alpha = \alpha_G$ and $\beta = \beta_G$. 
By Lemma \ref{optimalbetaalphathm}, either $\alpha = \beta$, or $\alpha = \beta + 1$.

Suppose $\alpha = \beta + 1$. Then $X = \emptyset$, and $G = G_\beta$. A straightforward calculation gives that 
\begin{equation}\label{p3gbetaeqn}
\tilde{P}_3(G_\beta) = n(n-2\beta-1)(2\beta - n/2 + 1). 
\end{equation}
Let $x = 2\beta + 1$. 
Then we need to show $n(n-x)(x-n/2) \leq n^3/16$, or $x(3n/2 - x) \leq 9n^2/16$.
Now, Lemma \ref{alphabetalemma} implies that $3n/4 - 1/2 \leq x \leq 3n/4 + 1/2$. We see that 
$x(3n/2 - x)$ is maximized when $x = 3n/4$, where it is equal to $9n^2/16$. This proves that when 
$\alpha = \beta + 1$, $\tilde{P}_3(G) \leq n^3/16$. 

Thus we may assume $\alpha = \beta$. Lemma \ref{alphabetalemma} now gives $3n/8 - 1/4 \leq \beta \leq 3n/8 + 1/4$, or 
$3n - 2 \leq 8\beta \leq 3n + 2$. Theorem \ref{CIGthm} then follows directly from equation (\ref{p3gbetaeqn}), together with 
Lemmas \ref{gbetalemma1} and \ref{gbetalemma2}. ~\bbox

\section{Four-Vertex Paths in $3$-free Digraphs}\label{p4section}

The main result of this section is: 
\begin{thm}\label{P_4thm}
If $G$ is a $3$-free digraph on $n$ vertices, then
$P_4(G) \leq \frac{4}{75}n^4.$ 
\end{thm}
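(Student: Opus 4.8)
The plan is to count four-vertex directed paths through their middle edge, translate the count into degree statistics, and then feed the global structure of $3$-free digraphs into the estimate via Bondy's theorem.

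First I would record the reduction. Since $G$ is $3$-free it is $2$-free, so $W_4(G)=P_4(G)$ as already observed, and a four-vertex directed walk $a\d b\d c\d d$ is determined by its middle edge $bc$ together with a choice of $a\in N^-(b)$ and $d\in N^+(c)$. Hence
\[
P_4(G)=W_4(G)=\sum_{bc\in E(G)}\delta^-(b)\,\delta^+(c).
\]
Second, I would prove the structural inequality that drives everything: for every edge $bc$ of a $3$-free digraph the three sets $N^-(b)$, $N^+(c)$ and $\{b,c\}$ are pairwise disjoint. Indeed a common vertex $x\in N^-(b)\cap N^+(c)$ would create the directed triangle $x\d b\d c\d x$, while $c\in N^-(b)$ or $b\in N^+(c)$ would create the digon $b\d c\d b$; all are excluded. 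Counting vertices then gives
\[
\delta^-(b)+\delta^+(c)\le n-2\qquad\text{for every edge }bc.
\]

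The third and decisive step is to combine this per-edge constraint with Bondy's bound $\tilde P_3(G)\le\frac{2}{25}n^3$ (Theorem \ref{bondythm}). I would set this up in the spirit of Bondy's own proof: write $\delta^-(b)\delta^+(c)=\frac14(\delta^-(b)+\delta^+(c))^2-\frac14(\delta^-(b)-\delta^+(c))^2$, sum over edges, and re-express the resulting edge statistics in terms of the per-vertex degrees and of the $3$-vertex subgraph counts $s_1,\dots,s_7$ from Bondy's identities, using $\sum_{bc\in E}\delta^-(b)=\sum_b\delta^-(b)\delta^+(b)$, $\sum_{bc\in E}(\delta^-(b))^2=\sum_b(\delta^-(b))^2\delta^+(b)$, and so on. In a $3$-free digraph $s_7=0$ and $\tilde P_3=s_4$, so Bondy's inequality supplies the extra relation $\frac{2}{25}n^3-s_4\ge0$. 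The aim is to produce nonnegative multipliers for which $P_4$ is bounded above by $\frac{4}{75}n^4$ minus a sum of squares of the type $\big(cn-\delta^{\pm}(v)\big)^2$ and $\big(\delta^-(v)-\delta^+(v)\big)^2$, minus a nonnegative multiple of $\big(\frac{2}{25}n^3-\tilde P_3\big)$; discarding the nonnegative terms yields the theorem. Note $\frac{4}{75}=\frac23\cdot\frac{2}{25}$, which suggests the scale of the multiplier on Bondy's bound.

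I expect this last step to be the main obstacle, for two related reasons. The per-edge constraint alone is far too weak: using only $\delta^-(b)+\delta^+(c)\le n-2$ one gets merely $P_4\le\frac14\sum_{bc}(\delta^-(b)+\delta^+(c))^2\le\frac{n-2}{2}P_3$, and $P_3=\sum_v\delta^-(v)\delta^+(v)$ is not controlled by Bondy, since a transitive tournament has $\tilde P_3=0$ yet $P_3\sim n^3/6$. Thus the transitive-tournament case, where $\tilde P_3=0$ but $P_4=\binom{n}{4}\sim n^4/24$, must be carried entirely by the degree constraint, whereas the near-extremal $C_4$-blow-up of Section \ref{CIGsection}, where the degrees are balanced and $\tilde P_3$ is large, must be carried by Bondy's bound. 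The difficulty is to find a \emph{single} sum-of-squares combination that interpolates correctly between these two regimes, with every squared term receiving a nonnegative coefficient and the constant landing exactly on $\frac{4}{75}$; certifying that nonnegativity is the crux of the computation.
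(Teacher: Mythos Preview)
Your reduction $P_4(G)=\sum_{bc\in E}\delta^-(b)\,\delta^+(c)$ and the edge inequality $\delta^-(b)+\delta^+(c)\le n-2$ are both correct, but the third step is not a proof: you announce a sum-of-squares certificate and then, quite candidly, explain why you have not found one. The difficulty is structural, not merely computational. The polarization you write down is circular once summed over edges: expanding $\sum_{bc}(\delta^-(b)\pm\delta^+(c))^2$ gives $\sum_b(\delta^-(b))^2\delta^+(b)+\sum_c\delta^-(c)(\delta^+(c))^2\pm 2P_4$, so the cross term \emph{is} $P_4$ again and nothing has been gained. More to the point, $P_4=\sum_{bc\in E}\delta^-(b)\delta^+(c)$ is a genuine edge-correlation statistic and is not determined by the per-vertex moments $\sum_i(d_i^-)^a(d_i^+)^b$ together with the $3$-vertex counts $s_1,\dots,s_7$: the disjoint union of two directed $3$-vertex paths and the disjoint union of a directed $4$-vertex path with a single edge have identical degree multisets and identical $(s_1,\dots,s_7)$, yet $P_4=0$ and $P_4=1$ respectively. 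So no identity in the Bondy framework can isolate $P_4$; any use of the edge constraint that brings $P_4$ to one side leaves cubic degree moments such as $\sum_v(d_v^-)^2 d_v^+$ on the other, and these are again outside the reach of Theorem~\ref{bondythm}. Your own observation that $P_3=\tilde P_3+s_6$ is uncontrolled is exactly this obstruction.

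The paper's argument is of a completely different shape. It partitions the $n^4$ ordered $4$-tuples according to whether their underlying set carries a directed $4$-cycle ($S$ such tuples), exactly one $4$-vertex path ($R$ tuples), or no $4$-vertex path (the set $N$), so that $24P_4=R+4S=n^4+3S-|N|$. The work is then in two weighted double-counting lemmas proved directly for $3$-free digraphs: first $S\le\tfrac{3n}{2}\tilde P_3$, obtained by giving each square $a\d b\d c\d d\d a$ weight $\sum 1/m(\cdot,\cdot)\ge 16/n$ over its two diagonals and matching against induced $2$-paths; and second $|N|\ge\tfrac23 S$, obtained by a Cauchy--Schwarz weighting of squares against $4$-tuples whose last two coordinates lie in the middle-vertex sets $M(u,v)\cup M(v,u)$ of a diagonal. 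Only then does Bondy's bound enter, yielding $24P_4\le n^4+\tfrac73 S\le n^4+\tfrac72 n\cdot\tfrac{2}{25}n^3=\tfrac{32}{25}n^4$. The ingredient your plan lacks is any analogue of these square-counting lemmas; they are what convert the $4$-vertex quantity $P_4$ into something Theorem~\ref{bondythm} can bound.
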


We first establish some notation and two key lemmas. 

Let $G$ be a $3$-free digraph on $n$ vertices. We will use the term {\em square} to 
refer to a subgraph of $G$ which is a directed cycle of length four. If $X \subseteq V(G)$ with $|X| = 4$, let
$t(X)$ be the number of $4$-vertex directed paths with vertex set $X$. We observe that since 
$G$ is $3$-free, $t(X) \in \{0,1,4\}$ for every such $X$. This
motivates the following definitions. Let $R$ be the number of four-tuples 
of distinct vertices $(a,b,c,d)$ such that $t(\{a,b,c,d\}) = 1$. Let $S$ be the number of 
four-tuples of distinct vertices $(a,b,c,d)$ such that 
$G|\{a,b,c,d\}$ is a square (equivalently, $t(\{a,b,c,d\}) = 4$).
Then $S$ is $24$ times the number of squares.
Define $N$ to be the set of four-tuples of vertices not counted by either $R$ or $S$, 
so $|N| = n^4 - R- S$. For distinct vertices $u,v$, let
$M(u,v)$ be the set of all vertices $x$ such that $(u,x,v)$ is an induced $3$-vertex path. Set
$m(u,v) = |M(u,v)|$, the number of induced directed $3$-vertex paths starting at $u$ and ending 
at $v$. Finally, define $T = \tilde{P}_3(G)$.

\begin{lemma}\label{upperSlemma} 
In a $3$-free digraph $G$, $S \leq \frac{3n}{2}T$.
\end{lemma}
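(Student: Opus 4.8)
The plan is to count the number $S$ of ordered four-tuples forming squares by relating each square to induced $3$-vertex paths it contains. Recall $S$ counts ordered tuples $(a,b,c,d)$ with $G|\{a,b,c,d\}$ a square, and $T = \tilde P_3(G)$ counts induced $3$-vertex directed paths. A square is a directed cycle $a\d b\d c\d d\d a$ of length four; since $G$ is $3$-free, the only edges among $\{a,b,c,d\}$ are the four cycle edges (a chord would create a shorter directed cycle), so each square is an induced subgraph with exactly the four cycle edges.

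First I would fix attention on a single undirected square, i.e.\ a set $\{a,b,c,d\}$ inducing a directed $4$-cycle, and count how many induced $3$-vertex paths live inside it. Reading the $4$-cycle $a\d b\d c\d d\d a$, the induced $3$-vertex directed paths are exactly the four consecutive triples $a\d b\d c$, $b\d c\d d$, $c\d d\d a$, $d\d a\d b$; there are no others, since the ``missing'' triples like $a,b,d$ would require a chord to form a directed path. So each square contains exactly four induced $3$-vertex paths. This means that if we let $Q$ denote the number of (unordered) squares, then each square contributes $4$ to a count of induced $3$-paths, giving a total contribution of $4Q$ across all squares. Since $S = 24Q$ (each unordered square yields $24$ ordered tuples), we have $Q = S/24$, so squares account for $4S/24 = S/6$ induced $3$-path occurrences counted with multiplicity over squares.

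Next I would bound this from above by $T$. The key observation is that each induced $3$-vertex path $u\d x\d v$ can be the ``consecutive triple'' of at most a bounded number of squares: a square containing $u\d x\d v$ as three consecutive vertices is determined by choosing the fourth vertex $w$ with $v\d w$ and $w\d u$ both edges (so $w\in N^+(v)\cap N^-(u)$, closing the cycle). Thus the number of squares using a fixed induced $3$-path as a consecutive triple equals $|N^+(v)\cap N^-(u)|$, which is at most $n$ but I expect a sharper per-path or averaged bound. Summing the identity ``$4Q = \sum_{\text{induced }3\text{-paths}} (\text{number of closing vertices } w)$'' over all induced $3$-paths and bounding the closing count appropriately should yield $S = 24Q \le \frac{3n}{2}T$; concretely, I would aim to show $4Q \le \tfrac{n}{4}T$ so that $S=24Q \le \tfrac{3n}{2}T$, i.e.\ the average number of distinct closing vertices per induced $3$-path, counted over the four rotations, is at most $n/16$-ish after accounting for each square being counted four times.

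The main obstacle I anticipate is getting the constant exactly $\tfrac{3n}{2}$ rather than a weaker multiple of $n$. The naive bound ``each induced $3$-path extends to at most $n$ squares'' combined with ``each square is counted four times'' would only give $S = 24Q = 6\cdot 4Q \le 6\cdot\tfrac{1}{4}\cdot nT$, which is already $\tfrac{3n}{2}T$ if the factor of $4$ (from the four consecutive triples per square) is handled correctly. So the real content is the clean double-counting identity $4Q = \sum_{u\d x\d v \text{ induced}} |N^+(v)\cap N^-(u)|$ together with the trivial bound $|N^+(v)\cap N^-(u)| \le n$; the factor arithmetic $S = 24Q$ and $6\cdot\tfrac14 = \tfrac32$ then delivers $S \le \tfrac{3n}{2}T$ directly. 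I would present the path-counting-inside-a-square lemma first, then the double count, then the final estimate.
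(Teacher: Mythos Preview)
Your double-counting identity is correct: writing $Q=S/24$ for the number of unordered squares, each square contains exactly four induced $3$-vertex paths, and for a fixed induced path $u\d x\d v$ the number of squares containing it as a consecutive triple equals $m(v,u)$. Hence
\[
4Q \;=\; \sum_{P=u\d x\d v \text{ induced}} m(v,u).
\]
But the step where you pass from this to $S\le \tfrac{3n}{2}T$ is broken. The trivial bound $m(v,u)\le n$ only yields $4Q\le nT$, i.e.\ $S=24Q\le 6nT$, which is four times too weak. The extra factor $\tfrac14$ you insert (``$6\cdot\tfrac14\cdot nT$'') is not justified: the ``each square is counted four times'' fact is already fully used in the identity $4Q=\sum_P m(v,u)$ and cannot be spent again.

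What is missing is a genuinely sharper estimate exploiting $3$-freeness beyond the induced-ness of squares. The paper's proof observes that for a fixed square $a\d b\d c\d d\d a$ the four sets $M(a,c),M(c,a),M(b,d),M(d,b)$ are pairwise disjoint (a common middle vertex would create a directed cycle of length at most $3$), so $m(a,c)+m(c,a)+m(b,d)+m(d,b)\le n$. By the AM--HM inequality,
\[
\frac{1}{m(a,c)}+\frac{1}{m(c,a)}+\frac{1}{m(b,d)}+\frac{1}{m(d,b)}\;\ge\;\frac{16}{n}.
\]
Now redo the double count with weight $1/m(v,u)$ on each incidence $(P,\Gamma)$ of an induced $3$-path $P=u\d x\d v$ inside a square $\Gamma$: summing over $P$ first gives $\sum_P m(v,u)\cdot\frac{1}{m(v,u)}=T$, while summing over $\Gamma$ first gives at least $(16/n)Q$. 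Thus $T\ge 16Q/n$, i.e.\ $S=24Q\le\tfrac{3n}{2}T$. Your unweighted count cannot recover this constant; the weighting (equivalently, the disjointness-plus-convexity step) is the essential idea you are missing.
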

\begin{proof}
We will write $P \sqsubset G$ to mean $P$ is a (directed) path of $G$, and $\sum_P, \sum_\Gamma$ 
to mean the sum over all induced $3$-vertex paths in $G$ and the sum over all squares in $G$, respectively.
For each square $\Gamma = a\d b\d c\d d\d a$ in $G$, define 
\begin{equation*}
\omega(\Gamma) = \frac{1}{m(c,a)} + \frac{1}{m(d,b)} + \frac{1}{m(a,c)} + \frac{1}{m(b,d)}. 
\end{equation*}
Now, since $m(a,c) + m(c,a) + m(b,d) + m(d,b) \leq n$ (each path has a middle vertex, and 
no vertex can serve as the middle of two of the paths counted since $G$ has no 
directed cycle of length at most three), $\omega(\Gamma) \geq 16/n$ for all $\Gamma$. Since 
there are $S/24$ squares, it follows that 
\begin{equation*}
\sum_\Gamma \omega(\Gamma) \geq \frac{16}{n}\left(\frac{S}{24}\right) = \frac{2}{3n}S. 
\end{equation*}
For an induced $3$-vertex path $P = u\d w\d v$ in $G$, let 
\begin{equation*}
\omega(P) = \frac{1}{m(v,u)} \left|\{\text{squares }\Gamma: P \sqsubset \Gamma\}\right|.
% = u\d w\d v\d x \d u \text{ for some }x \in V(G)\}\right|.
\end{equation*}
We claim that $\omega(P) = 1$ for all $P$. The squares containing $P$ are 
of the form $u\d w\d v\d x\d u$ where $(v,x,u)$ is also an induced $3$-vertex path. Since $G$ is $3$-free, 
every $4$-cycle is induced, so every choice of $x \in M(v,u)$ gives a square, proving
$\omega(P) = m(v,u)\cdot \frac{1}{m(v,u)} = 1$. 
Then $\sum_P \omega(P) = \sum_P 1 = T$ by definition.  \\

Finally, we show $\sum_P \omega(P) = \sum_\Gamma \omega(\Gamma)$. Below, let $P$ be $u\d w\d v$. 
Then 
\begin{equation*}
\begin{split}
\sum_P \omega(P)  &=  \sum_P \frac{1}{m(v,u)} \left|\{\text{squares }\Gamma: P \sqsubset \Gamma\}\right| 
\\ &=  \sum_P \sum_{\Gamma \sqsupset P} \frac{1}{m(u,v)} 
 =  \sum_\Gamma \sum_{P \sqsubset \Gamma} \frac{1}{m(u,v)}
 =  \sum_\Gamma \omega(\Gamma).
\end{split}
\end{equation*}
We now have $T = \sum_P \omega(P) = \sum_\Gamma \omega(\Gamma) \geq \frac{2}{3n}S$, or 
$S \leq \frac{3n}{2}T.$ 
This proves Lemma \ref{upperSlemma}.
\end{proof}

\begin{lemma}\label{lowerNlemma} 
If $G$ is a $3$-free digraph, then $|N| \geq \frac{2}{3}S$.
\end{lemma}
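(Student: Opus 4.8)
The goal is to show $|N| \geq \frac{2}{3}S$, where $N$ counts four-tuples of distinct vertices whose induced subgraph is neither a single $4$-path nor a square, and $S = 24 \cdot(\text{number of squares})$. The plan is to produce, for each square, enough distinct ``bad'' four-tuples in $N$ to account for the required lower bound. Since $S$ is $24$ times the number of squares and we need $|N| \geq \frac{2}{3}S = 16 \cdot(\text{number of squares})$, it suffices to exhibit, on average, $16$ four-tuples in $N$ per square.

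The natural approach is to fix a square $\Gamma = a\d b\d c\d d\d a$ and look at four-tuples obtained by keeping three of its vertices and swapping the fourth for some outside vertex, or more generally at four-tuples that ``witness'' the square. First I would examine a diagonal pair of the square, say $a$ and $c$: by the argument in Lemma~\ref{upperSlemma}, $(a,b,c)$ and $(c,d,a)$ are induced $3$-vertex paths, so $M(a,c)$ and $M(c,a)$ are nonempty (they contain $b$ and $d$ respectively), and these middle-vertex sets are disjoint since $G$ is $3$-free. The key structural fact to exploit is that for distinct vertices $u,v$, the $m(u,v)$ middle vertices together with the $m(v,u)$ middle vertices of the reverse paths all lie among the $n$ vertices and are pairwise distinct. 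I would use each such middle vertex to build a four-tuple containing $u$, $v$, and two middle vertices, and then argue these four-tuples land in $N$ because the resulting induced subgraph on four vertices, having two chords through $u,v$, cannot be a lone $4$-path or a square.

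The main technical content will be a careful counting that avoids overcounting: the same four-tuple in $N$ might be generated from several squares, so I would either set up an injection (or bounded-multiplicity map) from a collection of size at least $\frac{2}{3}S$ into $N$, or run a double-counting/charging argument analogous to the weighting scheme $\omega$ used in Lemma~\ref{upperSlemma}. Concretely, I expect the cleanest route is to charge each square a weight of $16$ and distribute that charge among four-tuples in $N$ using reciprocals of appropriate $m(\cdot,\cdot)$ values, mirroring the structure of the previous proof, so that the inequality $m(a,c)+m(c,a)+m(b,d)+m(d,b)\le n$ is again the engine that converts local data into the global bound.

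The hardest step will be verifying that the four-tuples produced genuinely lie in $N$ and that the charging is tight enough to reach the constant $\frac{2}{3}$ rather than something weaker. In particular I must rule out the degenerate possibility that a four-tuple I assign to $N$ is actually a square or a single path, which requires using $3$-freeness to control which edges can be present among the four chosen vertices; pinning down exactly which configurations arise, and checking that the count of genuinely ``bad'' four-tuples per square does not fall below the threshold, is where the real work lies.
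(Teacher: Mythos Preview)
Your overall strategy---a charging/double-counting argument that assigns each square weight $16$ and spreads it over four-tuples in $N$ via reciprocals of the $m(\cdot,\cdot)$'s---matches the paper's. But the specific four-tuples you propose do not work. You suggest taking a diagonal pair $u,v$ of the square together with two middle vertices $x,y\in M(u,v)\cup M(v,u)$. This fails on both fronts: if $x\in M(u,v)$ and $y\in M(v,u)$ then $u\d x\d v\d y\d u$ is itself a square, so the tuple lies in $S$; and if $x,y\in M(u,v)$ with an edge $x\to y$, then $u\d x\d y\d v$ is a directed $4$-path, so the tuple lies in $R$. Your heuristic that ``two chords through $u,v$'' forbid a lone $4$-path conflates induced paths with paths---$P_4$ counts non-induced paths, so extra edges at $u,v$ are no obstruction. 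Also, the inequality $m(a,c)+m(c,a)+m(b,d)+m(d,b)\le n$ is not the engine here; it was essential in Lemma~\ref{upperSlemma}, but in this lemma the bound $\omega(\Gamma)\ge 16$ comes from AM--GM alone.

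The paper's key move is to draw the two free coordinates from the middle sets of the \emph{other} diagonal: for $\Gamma=a\d b\d c\d d\d a$ one associates the tuples $(a,c,x,y)$ with $x,y\in M(b,d)\cup M(d,b)$ (and symmetrically $(b,d,x,y)$ with $x,y\in M(a,c)\cup M(c,a)$). The structural fact that makes this work is that $3$-freeness forbids any edge between $M(b,d)$ and $M(d,b)$, since such an edge would close a triangle through $b$ or $d$. As $a\in M(d,b)$ and $c\in M(b,d)$ lie in different parts, no $4$-path on $\{a,c,x,y\}$ can exist, so these tuples genuinely lie in $N$. On the square side one sets $\omega(\Gamma)=2\bigl((m(b,d)+m(d,b))^2/(m(a,c)m(c,a))+(m(a,c)+m(c,a))^2/(m(b,d)m(d,b))\bigr)\ge 16$, and on the tuple side $\omega(\pi)=|\{\Gamma:\Gamma\sim\pi\}|/(m(p,q)m(q,p))\le 1$; equating the two sums gives $|N|\ge \tfrac{2}{3}S$.
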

\begin{proof}
Let $\Gamma = a\d b\d c\d d\d a$ be a square in $G$. Define 
\begin{equation*}
\omega(\Gamma) = 2\left(\frac{(m(b,d) + m(d,b))^2}{m(a,c)m(c,a)} 
+ \frac{(m(a,c) + m(c,a))^2}{m(b,d)m(d,b)}\right).
\end{equation*}
Again, $m(a,c) + m(c,a) + m(d,b) + m(b,d) \leq n$, and by Cauchy-Schwarz,  
$\omega(\Gamma) \geq 16$ (since we know that $m(u,v) > 0$ for each 
relevant $u,v$). Since there are $S/24$ squares, we have
\begin{equation}\label{gammabound}
\frac{2}{3}S \leq \sum_\Gamma \omega(\Gamma).
\end{equation}

Given a four-tuple of vertices $\pi = (p,q,r,s)$ and a square $\Gamma$, 
we say they are {\it associated}, and write $\pi \sim \Gamma$, 
if there exist vertices $u,v$ such that 
$\Gamma = p \d u \d q \d v \d p$ and $r,s \in M(u,v)\cup M(v,u)$. 
Note that for a square $\Gamma = a\d b\d c\d d\d a$, 
the four-tuples associated with it are precisely those of the forms 
$(a,c,x,y) \text{ or } (c,a,x,y)$ where $x,y \in M(d,b)\cup M(b,d)$, 
and $(b,d,x,y) \text{ or } (d,b,x,y)$ with $x,y \in M(a,c)\cup M(c,a)$. 

Now, for a four-tuple of vertices $\pi = (p,q,r,s)$, define $\omega(\pi)$ as follows:
\begin{equation*}
\omega(\pi) = \frac{|\{\Gamma: \Gamma  \sim \pi\}|}{m(p,q)m(q,p)}.
\end{equation*}
Note that $\omega(\pi) \leq 1$, since the number of squares associated with
$\pi$ is at most $m(p,q)m(q,p)$ by definition. Then 
\begin{equation}\label{pibound}
\sum_{\pi \in N} \omega(\pi) \leq \sum_{\pi \in N} 1 \leq |N|.
\end{equation}

Next, if $\Gamma = a\d b\d c\d d\d a$ is a square in $G$, 
we show that $\pi \sim \Gamma$ implies $\pi \in N$. Without
loss of generality, we may let $\pi = (a,c,x,y)$. 
We need to show that there is no 4-vertex path with vertex set $\{a,c,x,y\}$.
This is clear if $a,c,x,y$ are not all distinct, so we assume they are 
distinct. Since $b$ is adjacent to every vertex in $M(b,d)$ and from every 
vertex in $M(d,b)$, there is no edge from $M(b,d)$ to $M(d,b)$, since 
otherwise there would be a directed triangle. Similarly,
there is no edge from a vertex in $M(d,b)$ to a vertex in $M(b,d)$. Consequently, 
if $X$ is a set of four vertices so that $G|X$ has a $4$-vertex path 
as a subgraph and $X \subseteq M(b,d) \cup M(b,d)$, then $X \subseteq M(b,d)$ or 
$X \subseteq M(b,d)$. So not both of $a,c$ are in $X$. This proves that 
every $\pi$ associated with $\Gamma$ belongs to $N$. \\

This observation allows us to relate $\sum_\Gamma \omega(\Gamma)$ to 
$\sum_{\pi \in N} \omega(\pi)$. Assuming $\pi = (p,q,r,s)$ for
the purposes of writing $\omega(\pi)$,  
\begin{displaymath}
\sum_\Gamma \omega(\Gamma)= 
\sum_\Gamma{\sum_{\pi \sim \Gamma} \frac{1}{m(p,q)m(q,p)} } = 
\sum_{\pi \in N} \sum_{\Gamma \sim \pi} \frac{1}{m(p,q)m(q,p)} =   
\sum_{\pi \in N} \omega(\pi).
\end{displaymath}

Combining this with (\ref{gammabound}) and (\ref{pibound}), we have
\begin{displaymath}
\frac{2S}{3} \leq \sum_\Gamma \omega(\Gamma) = \sum_{\pi \in N} \omega(\pi) \leq |N|.
\end{displaymath}
This proves Lemma \ref{lowerNlemma}.
\end{proof}

{\bf Proof of Theorem \ref{P_4thm}:}
Note that $n^4 = R + S + |N|$ by definition. We can also express the number
of $4$-vertex paths $P_4(G)$ in terms of these parameters, as $24P_4(G) = 4S + R$. Combining these
equalities, we write 
\begin{equation}\label{PSNequality}
24P_4(G) = n^4 + 3S- |N|.
\end{equation}
To prove an upper bound for $P_4(G)$, it then suffices to bound $S$ from above and $|N|$ from below. 
From Lemmas \ref{upperSlemma} and \ref{lowerNlemma}, we have $S \leq \frac{3n}{2}T$ and $|N| \geq \frac{2}{3}S$.
Combining these with (\ref{PSNequality}), we see that:  
\begin{equation*}
24P_4(G) \leq  n^4 + \frac{7}{3}S \leq n^4 + \frac{7}{2}nT. 
\end{equation*}
But $T \leq \frac{2}{25}n^3$ by Theorem \ref{bondythm},  and so
$24P_4(G) \leq (1 + 7/25)n^4$, or $P_4(G) \leq \frac{4}{75}n^4$, as desired. ~\bbox

It follows immediately from Theorem \ref{P_4thm} that every $3$-free digraph on $n$ vertices has a vertex of 
out-degree at most $\sqrt[3]{4/75}n \approx .3764n$. Note that if Conjecture \ref{thomasseconj} holds, we could 
replace Bondy's bound on $P_3$ by $n^3/15$, and the proof of Theorem \ref{P_4thm} would then give
$P_4(G) < \frac{1}{19.45}n^4 \approx .0514n^4$, implying the existence of a vertex with
out-degree at most $\sqrt[3]{\frac{1}{19.45}}n \approx .37184n$.

\end{document}